\documentclass[final,3p,12pt]{elsarticle}

\usepackage[english]{babel}   
\usepackage[T1]{fontenc}      
\usepackage[utf8]{inputenc}   
\usepackage{lmodern}          

\usepackage{lineno}
\usepackage{graphicx}
\usepackage{xcolor}
\usepackage{multirow}
\usepackage{xspace}
\usepackage{amsfonts}
\usepackage{caption}
\usepackage{amsmath}
\usepackage{amsthm}
\usepackage{amssymb}
\usepackage{comment}
\usepackage{enumitem}
\usepackage{mathrsfs}
\usepackage{tikz}
\usepackage{tikz-cd}

\newcommand{\R}{\mathbb{R}}
\newcommand{\C}{\mathbb{C}}
\newcommand{\N}{\mathbb{N}}

\newcommand{\Gr}{\textnormal{Gr}}
\renewcommand{\P}{\mathbb{P}}

\newcommand{\ovl}{\overline}

\renewcommand{\varnothing}{\emptyset}

\newtheorem{definition}{Definition}
\newtheorem{lemma}[definition]{Lemma}

\newtheorem{corollary}[definition]{Corollary}
\newtheorem{remark}[definition]{Remark}

\newtheorem{mylemmaA}{Lemma \textbf{A}.\!}

\newtheorem{mypropB}{Proposition \textbf{B}.\!}

\newtheorem{mycorB}{Corollary \textbf{B}.\!}

\newtheorem{mypropC}{Proposition \textbf{C}.\!}

\newtheorem{mytheoremD}{Theorem \textbf{D}.\!}

\theoremstyle{definition}
\newtheorem{example}[definition]{Example}

\begin{document}

\begin{frontmatter}

\title{Real Line Congruences of Trilinear Birational Maps}

\author[jkuag]{\footnote{The three authors contributed equally; the order of authors is alphabetical.}Bert J\"uttler\corref{cor}}
\ead{bert.juettler@jku.at}
\author[cunef]{Pablo Maz\'on}
\ead{pablo.mazon@cunef.edu}
\author[jkurisc]{Josef Schicho}
\ead{josef.schicho@jku.at}
\address[jkuag]{Institute of Applied Geometry, Johannes Kepler University,
Linz, Austria}
\address[cunef]{Department of Mathematics,
CUNEF Universidad,
Madrid, Spain}
\address[jkurisc]{Research Institute for Symbolic Computation, Johannes Kepler University,
Linz, Austria}
\cortext[cor]{Corresponding author}

\begin{abstract}
Trilinear mappings appear naturally when performing spatial
isogeometric discretizations of degree $p = 1$.  Among them,
birational mappings are characterized by the property that both the
mapping and the associated inverse mapping are rational and thus
easy to evaluate.  These mappings have recently been analyzed by
\cite{trilinear}.  Among other results, the authors provide a
classification of these mappings over the field of complex numbers.

The parameter lines of trilinear mappings form three two-parameter
systems of straight lines, and thus it is promising to analyze these
mappings with the tools provided by the field of line geometry, which
is a classical branch of higher geometry \citep{pottmann}.  Indeed, in
the birational case, the three systems of lines form space-filling
line congruences associated with rational mappings that can be used to
parameterize certain algebraic surfaces~\citep{usingLCs}.  Moreover,
the three systems are closely related, and based on these observations
we will present a geometric discussion of the results of
\cite{trilinear} together with a more detailed analysis of the
classification over the field of real numbers.
\end{abstract}

\begin{keyword}
	line geometry 
	\sep 
	line congruence 
	\sep 
  	trilinear parameterization 
  	\sep 
  	birational map 
\end{keyword}

\end{frontmatter}

\section{Introduction}

Line geometry is a classical field that dates back to Julius Plücker’s
19th‑century work. Line geometry is based on the idea that the
fundamental entities that make up the space are the straight
lines. \citet{pottmann} provide a thorough and virtually complete
introduction to this fascinating branch of geometry.  This framework
is particularly well suited for geometric objects in which many lines
are present.

One-dimensional varieties of lines are known as ruled surfaces in
differential geometry; it is therefore natural to study such surfaces
using concepts from line geometry \citep{PEREZDIAZ2026578}.
Two-dimensional varieties of lines are called line congruences; they
arise naturally in contexts such as algebraic surface parameterization
\citep{jue:93b,usingLCs} and Hausdorff distance computation
\citep{Sohn2002236}.  Three-dimensional varieties of lines, which are
called line complexes, have also been studied.

Line geometry has numerous applications. These include the analysis
and the design of three-dimensional rigid body motions, since these
can be modeled as rotations (represented by dual unit quaternions) of
the dual unit sphere. Recent results include the analysis of rational
motions with low-degree trajectories \citep{MR4031400}, CNC motion
planning \cite{Calleja18}, and the analysis of line-symmetric motions
\citep{MR4932549}. Other applications include computer vision
\citep{Ronda2008193,MR4765605} and discrete differential and
combinatorial geometry \citep{MR3499552,MR2886550}.

Rational spline parameterizations--based on rational spline (i.e.,
NURBS) functions--are fundamental in Computer‑Aided Geometric Design
and underpin Isogeometric Analysis \citep[IGA,][]{Hughes20054135},
where the geometry map directly affects approximation spaces,
quadrature, conditioning, and solver performance.  Within IGA, but
also in other contexts, birational mappings are attractive because
they enable exact rational pull-back and push-forward of fields
defined by isogeometric discretizations. Prior work provides
constructive criteria and explicit inverses for planar and trivariate
cases, including bilinear quadrilateral maps \citep{Sederberg20151},
degree $(1, n)$ families \citep{Sederberg20161}, quadratic planar
birational maps \citep{Wang2021}, and a complete characterization of
trilinear birational mappings in 3D over the complex field
\citep{trilinear}.

Trilinear mappings appear naturally when performing spatial
isogeometric discretizations of degree $p=1$.  These parameterizations
are naturally connected to the classical and computational theory of
line congruences.  The parameter lines of trilinear mappings form
three two‑parameter varieties (i.e., congruences) of lines, making
line geometry a natural tool for analysis.  However, this connection
remains largely unexplored in the context of birational trilinear
maps.

Trilinear rational quaternionic parameterizations have been applied
by \cite{application_dupint} to construct ``Dupin cyclidic cubes''.
There, the parameter lines form three two-parameter families of straight
lines or circles. Moreover, circles from two different families
intersect only orthogonally (if they do intersect).

The present paper is devoted to the geometry of the line congruences
associated with all trilinear birational maps. More precisely, we
analyze these mappings through three parametric line congruences and
their interactions, with a focus on classification over the real
numbers, thereby extending the results of \cite{trilinear}.

The paper is organized as follows. 
In Section~\ref{sec: preliminaries} we recall the basic notions needed for the classification of line congruences. 
In particular, we briefly review the Plücker coordinates of lines in $\P_\C^3$ and the Klein quadric $Q\subset \P_\C^5$. 
We also introduce algebraic real line congruences, together with their focal varieties and the classes that will appear in the paper. 
Moreover, we present trilinear birational maps, whose parametric line congruences are the main object of study of this work.
In Section~\ref{sec: first results} we prove two preliminary results concerning the focal varieties of real line congruences and recall some facts about the geometry of complex lines in $\P_\C^3$. 
Finally, in Section~\ref{sec: classification} we develop the core of the paper. 
More precisely, we classify the line congruences arising from real trilinear birational maps, considering all possible types: $(1,1,1)$, $(1,1,2)$, $(1,2,2)$, and $(2,2,2)$ and their permutations. 
Each type is treated in a separate subsection, and all subsections follow the same structure, explained at the opening of the section.  

\section{Preliminaries}
\label{sec: preliminaries}

\subsection{Duality}
\label{subsec: duality}
Given a vector space $V$ of dimension $n$, we have the isomorphism 
\begin{equation}
\label{eq: duality}
\bigwedge^k V 
\simeq 
\bigwedge^{n-k} 
V^\vee 
\otimes 
\bigwedge^n V
\end{equation}
for each $1\leq k\leq n$, where $V^\vee$ is the dual vector space. 
Since $\dim \bigwedge^n V = 1$, the isomorphism above induces the identification $\P(\bigwedge^k V) \simeq \P(\bigwedge^{n-k} V^\vee)$. 

\subsection{Plücker coordinates of lines in 3D space}
\label{subsec: plucker coordinates}

Let $V = \C^4$. 
A line $\ell\subset \P_{\C}^3 = \P(\C^4)$ is defined by two points $a=(a_0:a_1:a_2:a_3)$ and $b=(b_0:b_1:b_2:b_3)$ in $\P_{\C}^3$. 
The Plücker coordinates $(c_{01}:c_{02}:c_{03}:c_{23}:c_{31}:
c_{12})\in\P_{\C}^5 = \P(\bigwedge^2\C^4)$ of $\ell$ are determined by the exterior product $a\wedge b$, namely 
\begin{gather*}
a\wedge b = (a_0 e_0 + a_1 e_1 + a_2 e_2 + a_3 e_3) 
\wedge 
(b_0 e_0 + b_1 e_1 + b_2 e_2 + b_3 e_3) 
= \\[2pt]
c_{01} (e_0 \wedge e_1) + 
c_{02} (e_0 \wedge e_2) + 
c_{03} (e_0 \wedge e_3) + 
c_{23} (e_2 \wedge e_3) + 
c_{31} (e_3 \wedge e_1) + 
c_{12} (e_1 \wedge e_2)
\end{gather*}
where $e_i$ is the $i$-th canonical vector in $\C^4$.  
In analogy, $\ell$ is determined by two planes $\alpha=(\alpha_0:\alpha_1:\alpha_2:\alpha_3)$ and $\beta=(\beta_0:\beta_1:\beta_2:\beta_3)$ in $(\P_{\C}^3)^\vee = \P((\C^4)^\vee)$. 
The Plücker coordinates $(\gamma_{01}:\gamma_{02}:\gamma_{03}:\gamma_{23}:\gamma_{31}:\gamma_{12})\in \P^5 = \P(\bigwedge^2(\C^4)^\vee)$ are determined by the exterior product $\alpha\wedge \beta$, namely 
\begin{gather*}
\alpha\wedge \beta = (\alpha_0 x_0 + \alpha_1 x_1 + \alpha_2 x_2 + \alpha_3 x_3) 
\wedge 
(\beta_0 x_0 + \beta_1 x_1 + \beta_2 x_2 + \beta_3 x_3) 
= \\[2pt]
\gamma_{01} (x_0 \wedge x_1) + 
\gamma_{02} (x_0 \wedge x_2) + 
\gamma_{03} (x_0 \wedge x_3) + 
\gamma_{23} (x_2 \wedge x_3) + 
\gamma_{31} (x_3 \wedge x_1) + 
\gamma_{12} (x_1 \wedge x_2)
\end{gather*}
where $x_i$ is the dual vector to $e_i$.  
In particular, the isomorphism $\P(\bigwedge^2 \C^4) \simeq \P(\bigwedge^2 (\C^4)^\vee)$ from \eqref{eq: duality} yields the identification between the Plücker coordinates, namely  
$$
(c_{01}:c_{02}:c_{03}:c_{23}:c_{31}:
c_{12})
=
(\gamma_{23}:\gamma_{31}:\gamma_{12}:\gamma_{01}:\gamma_{02}:\gamma_{03})
\ .
$$ 
The \textit{contraction} of $\bigwedge^2 \C^4$ by  $\gamma \in (\C^4)^\vee$ is the linear map $\iota_\gamma: \bigwedge^2 \C^4 \xrightarrow{} \C^4$ defined as 
$$
\iota_\gamma(a\wedge b) = \gamma(a) b - \gamma(b) a 
$$
on decomposable vectors and extended linearly.  
Geometrically, $\iota_\gamma(a\wedge b)$ determines the homogeneous coordinates of the intersection between the line $\ell\equiv a\wedge b$ and the plane $\gamma = 0$. 

\subsection{The Klein quadric}

The set of lines in $\P_{\C}^3$ is an algebraic variety $Q\subset \P_{\C}^5$ of dimension 4, known as the \textit{Klein quadric}. 
Namely, it is the image of the Plücker embedding 
\begin{eqnarray}
\label{eq: plucker embedding}
\iota : \Gr_2(\C^4) \simeq \Gr_2((\C^4)^\vee)
&\xrightarrow{} &
\P_{\C}^5 
\simeq 
\P(\C^4 \wedge \C^4)
\simeq 
\P((\C^4)^\vee \wedge (\C^4)^\vee)
\\[2pt] 
\nonumber
\ell 
&\mapsto 
& 
(c_{01}:c_{02}:c_{03}:c_{23}:c_{31}:
c_{12})
= 
(\gamma_{23}:\gamma_{31}:\gamma_{12}:\gamma_{01}:\gamma_{02}:\gamma_{01})
\end{eqnarray}
The defining equation of $Q$ is 
\begin{equation}
\ell \wedge \ell 
\equiv 
c_{01}c_{23}
+ 
c_{02}c_{31}
+
c_{03}c_{12}
= 
0
\ .
\end{equation}
Moreover, two lines $\ell,\ell'\subset \P_{\C}^3$ intersect if and only if 
\begin{equation}
\label{eq: intersecting lines}
\ell \wedge \ell'
\equiv 
c_{01}\gamma'_{23}
+ 
c_{02}\gamma'_{31}
+
c_{03}\gamma'_{12}
+
c_{23}\gamma'_{01}
+ 
c_{31}\gamma'_{02}
+
c_{12}\gamma'_{03}
= 
0
\ .
\end{equation}

\subsection{Real line congruences}
\label{subsection: LC}

An algebraic variety $X\subset \P_\C^n$ is \textit{real} if $X = \ovl{X}$, i$.$e$.$ it is invariant under complex conjugation. 
We denote by $X(\R)$ its real locus. 

\begin{definition}
\label{def: LC and real LC}
A(n algebraic) \textit{line congruence} is a(n algebraic) surface
$
X \subset Q \subset \P_{\C}^5
$. 
It is \textit{real} if $X$ is real and its real locus
$
X(\R)
$
has real dimension $2$.
\end{definition}

There are several ways to specify a line congruence $X\subset Q$. 
The two most straightforward approaches are (1) to provide the defining equations of $X$ in $\P_{\C}^5$, or (2) to provide a rational parametrization of $X$. 
An intermediate approach, which is often more geometrically intuitive, is (3) to provide a set of focal elements.  

\begin{definition}
\label{def: focal variety}
Let $X$ be a line congruence. 
A variety $V\subset \P_{\C}^3$ is \textit{focal} for $X$ if the following conditions are satisfied: 
\begin{enumerate}[itemsep=2pt]
\item[$a)$] $V\cap x \not=\varnothing$ for every $x \in X$.  
\item[$b)$] $V$ is minimal with respect to property $a)$, i$.$e$.$ if $W\subset V$ satisfies $a)$ then $W = V$.
\end{enumerate}  
\end{definition}

Most often, a line congruence is specified by focal curves. 
However, a single point can be a focal variety. 
In this paper, the following possibilities will appear. 

\begin{definition}
\label{def: types of LC}
Let $X$ be a real line congruence. 
We say that: 
\begin{enumerate}[itemsep=2pt]
\item[$A.$] $X$ is \textit{linear} if it is contained in a 3-plane in $\mathbb P^5_{\mathbb C}$. There are the following possibilities: 
\begin{itemize}
\item[$A.1.$] $X$ is \textit{hyperbolic linear} if it has two skew real focal lines.  
\item[$A.2.$] $X$ is \textit{elliptic linear} if it has two skew complex-conjugate focal lines.
\item[$A.3.$] $X$ is \textit{parabolic linear} if it has a single focal line (limit case of $A.1$ and $A.2$). 
\end{itemize}
\item[$B.$] $X$ is \textit{quadratic} if it has a focal plane conic and a focal line intersecting the conic.
\item[$C.$] $X$ is \textit{degenerate} if it has a single focal point\footnote{Degenerate congruences are also linear.}.
\end{enumerate}
\end{definition} 

It is helpful to view the focal elements of parabolic linear $(A.3)$ and degenerate $(C)$ line congruences as limiting configurations of two skew lines in $\P_\C^3$. 
From the perspective of algebraic geometry, such configurations are naturally related to flat limits, which are parameterized by the Hilbert scheme of two skew lines in $\P_\C^3$. 
We refer the reader to \cite[Section~2.1]{Hilbert_two_lines} and \cite[Example~6.3]{full_fiber} for further details. 
In particular, the following degenerations may occur:
\begin{enumerate}
\item Two skew lines $x$ and $y$ approach each other along a smooth quadric surface $Q\subset \P_\C^3$ until they become infinitely near (flat limit), yielding a double line contained in $Q$. 
The underlying scheme is supported on the line $x=y$, but carries additional infinitesimal structure. 
More precisely, to each point $P\in x$ there is attached a tangent direction contained in the tangent space $T_PQ$ of the quadric at $P$. 
Consequently, every line in the congruence passing through $P$ is tangent to $Q$. 
In this case we obtain a parabolic linear congruence $(A.3)$. 
We denote this limiting configuration by $y\leadsto x$, indicating that the line $y$ becomes infinitely near to $x$ through some smooth quadric $Q$.

\item Two skew lines $x$ and $y$ approach each other along a common normal direction until they intersect at a point. 
In this case the limiting scheme is the union of two coplanar lines $x\cup y$, with additional structure at the intersection point $P=x\cap y$. 
More precisely, $P$ becomes a spatial double point of $\P_\C^3$, meaning that every direction in space is attached to $P$. 
Accordingly, the limiting line congruence is degenerate: every line passes through $P$ and may take any direction in $\P_\C^3$.
\end{enumerate}

\subsection{Parametric line congruences of trilinear maps}
\label{subsec: tril bir}

In this paper, we are interested in the line congruences that arise as the parametric lines of trilinear birational maps. 
The coordinate ring of $(\P_\C^1)^3 = \P_\C^1 \times \P_\C^1 \times \P_\C^1$ is the tensor product $R = \C[s_0,s_1]\otimes \C[t_0,t_1]\otimes \C[u_0,u_1]$. 

\begin{definition}
\label{def: TBM}
A \textit{trilinear rational map} is a rational map 
\begin{eqnarray}
\label{eq: trilinear map}
\phi : (\P_\C^1)^3
&\dashrightarrow &\P_\C^3 
\\[2pt] 
\nonumber
(s_0:s_1)
\times 
(t_0:t_1)
\times 
(u_0:u_1)
&\mapsto 
& 
(f_0:f_1:f_2:f_3)
\end{eqnarray} 
for some $f_i = f_i(s_0,s_1,t_0,t_1,u_0,u_1)$ trilinear, without a common factor. 
It is \textit{real} if $f_0,f_1,f_2,f_3$ are all real polynomials. 
It is \textit{birational} if it admits an inverse rational map 
\begin{eqnarray}
\label{eq: inverse map}
\phi^{-1} : \P_\C^3 &\dashrightarrow & \P_\C^1 \times \P_\C^1 \times \P_\C^1
\\[2pt] 
\nonumber
(x_0:x_1:x_2:x_3)
&\mapsto 
&
(\sigma_0:
\sigma_1)
\times
(\tau_0:
\tau_1)
\times
(\upsilon_0:
\upsilon_1)
\end{eqnarray}
where $\sigma_i = \sigma_i(x_0,x_1,x_2,x_3)$ (resp$.$ for $\tau_j$ and $\upsilon_k$) are homogeneous of the same degree, without a common factor. 
\end{definition}

A trilinear rational map naturally yields three families of line congruences. 
Specifically, each of these line congruences consists of the parametric lines of the map. 

\begin{definition}
\label{def: parametric lines}
Let $\phi$ be a trilinear rational map. 
An \textit{$s$-line} is the image of the restriction to $(y_0:y_1)\times (z_0:z_1) = (\beta_0:\beta_1)\times (\gamma_0:\gamma_1)$ for some point in $\P_\C^1\times \P_\C^1$. 
The $t$- and \textit{$u$-lines} are defined analogously. 
\end{definition}

\begin{definition}
\label{def: parametric LCs}
Let $\phi$ be a trilinear rational map. 
The \textit{parametric line congruences} of $\phi$ are the line congruences $S,T,U$ respectively defined by the $s$-, $t$- and $u$-lines of the map. 
\end{definition}

\begin{remark}
\label{remark: no planar LCs}
\textit{Planar line congruences}, that is, congruences whose lines are all contained in a single plane, cannot arise as the parametric line congruences of trilinear birational maps. 
Indeed, for such a map to be dominant, the lines of the congruence must sweep out a Zariski open subset of $\P_\C^3$. 
\end{remark}

In general, it is always possible to parameterize a parametric line congruence of $\phi$ with biquadratic polynomials. 
More explicitly, the image of the map $\omega:\P_\C^1\times\P_\C^1\dashrightarrow Q$ given by 
\begin{equation}
\label{eq: Plucker coords of s-line}
(t_0:t_1)\times (u_0:u_1)
\mapsto
\Phi_S 
= 
\phi
(
(1:0)
\times 
(t_0:t_1)
\times 
(u_0:u_1)
)
\wedge 
\phi
(
(0:1)
\times 
(t_0:t_1)
\times 
(u_0:u_1)
)
\end{equation}
is $S$. 

The idea underlying \eqref{eq: Plucker coords of s-line} is to take the exterior product of two points on each parametric line. 
Since the original parametrization has trilinear entries in the variables $s_0,s_1,t_0,t_1,u_0,u_1$, the induced parametrization of the corresponding lines in Plücker coordinates is given by biquadratic polynomials in $t_0,t_1,u_0,u_1$.
However, when $\phi$ is birational, one can obtain parameterizations of lower degree by exploiting the syzygies, or moving planes, of $\phi$. 
More precisely, for each parametric line the syzygies yield two planes containing that line. 
Taking the exterior product of these two planes recovers the same Plücker coordinates as in \eqref{eq: Plucker coords of s-line}, as described in Section \ref{subsec: plucker coordinates}, but often with a reduced degree and more geometric content. 
This observation constitutes the key idea underlying our approach to the classification. 
Lastly, there exist several classes of trilinear birational maps.  
\begin{definition}
The \textit{type} of a trilinear
 birational map $\phi$ is $(\deg \sigma_i,\deg \tau_j, \deg \upsilon_k)\in \N^3$. 
\end{definition}

In \cite{trilinear}, the authors prove that the only possible types of a trilinear birational map are (up to permutation of the indices): $(1,1,1)$, $(1,1,2)$, $(1,2,2)$, and $(2,2,2)$.  

\section{Preliminary Results}
\label{sec: first results}

\subsection{Real Focal Varieties}

We begin by proving several results about the focal varieties of real line congruences that will be used later.

\begin{lemma}
\label{lemma: minimality of focal}
Let $X$ be a line congruence. 
Moreover, let $C\subset\P_\C^3$ be a curve such that $C\cap x\not= \varnothing$ for every $x\in X$. 
Then, one of the following holds: 
\begin{enumerate}
\item $V$ is a focal variety of $X$. 
\item $X$ is degenerate. 
\end{enumerate}
If additionally $X$ is real, then $\ovl{V}$ is also a focal variety of $X$. 
\end{lemma}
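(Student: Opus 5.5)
Throughout I read the $V$ in the statement as the curve $C$, and I take $X$ to be irreducible, as a line congruence (a surface) is meant to be. The plan is to test minimality of $C$ in Definition~\ref{def: focal variety} directly. I then show that any failure of minimality forces all lines of $X$ through a single point. Finally I transfer the focal property to $\ovl{C}$ by complex conjugation.

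\emph{Step 1: reduction to $C$ irreducible.} If $C$ is reducible, write $C=C_1\cup\dots\cup C_r$ with irreducible components $C_i$. Consider the closed sets
$$X_i=\{x\in X : x\cap C_i\neq\varnothing\}.$$
Since $X=\bigcup_i X_i$ and $X$ is irreducible, some $X_i$ equals $X$. So there is a single irreducible component meeting every line. Iterating this replaces $C$ by a minimal union of components with property~$a)$; in the main case this is one irreducible curve. From now on I assume $C$ is irreducible, which is the case the statement has in mind.

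\emph{Step 2: minimality or degeneracy.} Let $W\subsetneq C$ be a closed subvariety satisfying $a)$. Since $C$ is an irreducible curve, $W$ is a finite set of points $\{P_1,\dots,P_m\}$. For each $j$, the lines through $P_j$ form the star $\Sigma_{P_j}$, an irreducible closed surface in $Q$ (a plane $\P^2_\C\subset Q$). Property $a)$ for $W$ says
$$X\subseteq \Sigma_{P_1}\cup\dots\cup\Sigma_{P_m}.$$
By irreducibility, $X\subseteq\Sigma_{P_j}$ for some $j$. Both are irreducible surfaces, so $X=\Sigma_{P_j}$. Then $X$ consists of all lines through $P_j$, it has the single focal point $P_j$, and it is degenerate in the sense of Definition~\ref{def: types of LC}. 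Otherwise no proper $W$ exists, and $C$ is minimal, hence focal.

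\emph{Step 3: conjugation.} Suppose $X=\ovl{X}$ and $C$ is focal. For any $x\in X$, we have $\ovl{x}\in X$, so $C\cap\ovl{x}\neq\varnothing$. Conjugating gives $\ovl{C}\cap x\neq\varnothing$, so $\ovl{C}$ satisfies $a)$. If $W\subseteq\ovl{C}$ satisfies $a)$, the same argument shows $\ovl{W}\subseteq C$ satisfies $a)$. Minimality of $C$ then gives $\ovl{W}=C$, i.e. $W=\ovl{C}$. This step is purely formal, because conjugation is an involution preserving $Q$, $X$, incidence, and inclusion of varieties.

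\emph{Main obstacle.} The only delicate point is the reducible case in Step~1. There the correct conclusion is that a suitable union of components of $C$ is focal, not $C$ itself. So I would state explicitly that $C$ is taken irreducible, or minimal among its unions of components. The heart of the argument is the irreducibility step in Step~2, which identifies $X$ with a star of lines.
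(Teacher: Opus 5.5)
Your proof is correct and follows the same route as the paper. A proper subvariety of the (irreducible) curve is a finite set of points, which forces every line through one point, so $X$ is degenerate; and your conjugation step is the paper's argument written out in full. You are more careful than the paper in two places: you use irreducibility of $X$ to pass from $X\subseteq\bigcup_j\Sigma_{P_j}$ to $X=\Sigma_{P_j}$, which the paper compresses into ``any additional incidence condition does not yield a line congruence'', and you rightly point out that $C$ must be irreducible, an assumption the paper makes only implicitly.
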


\begin{proof} 
If $V$ is not a focal variety of $X$, then condition $b)$ from Definition \ref{def: focal variety} fails. 
In particular, there is a variety $W \subsetneq V$ satisfying that $W\cap x\not=\varnothing$ for every $x\in X$. 
Since the only subvarieties (irreducible) of a curve are its points, $W$ must be a point in $V$. 
As any additional incidence condition does not yield a line congruence, the statement follows. 

Regarding the last statement, if $X$ is real then $X = \ovl{X}$. 
Hence, every line $x \in X$ is $x = \ovl{y}$ for some $y\in X$. 
In particular, every $x\in X$ intersects $\overline{V}$ since by definition $V\cap y \not= \varnothing$ for every $y \in X$. 
On the other hand, the minimality of $V$ implies the same property for $\overline{V}$. 
Therefore, $\ovl{V}$ is also a focal variety. 
\end{proof}

The next result shows that, among the possibilities listed in Definition \ref{def: types of LC}, nonreal focal varieties may arise only in the elliptic linear $(A.2)$ case. 

\begin{corollary}
\label{corollary: focal real}
The focal varieties of a real line congruence $X$ of class either $A.3$, $B$, or $C$ are real. 
\end{corollary}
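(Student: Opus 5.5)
The plan is to combine the reality statement of Lemma~\ref{lemma: minimality of focal} with the fact that in classes $A.3$, $B$ and $C$ the focal configuration is unique and rigid. Complex conjugation can then only send each focal element to itself. Let $X$ be real and let $V$ be one of its focal varieties. Since $X=\ovl{X}$, conjugating the incidence condition $V\cap x\neq\varnothing$ for all $x\in X$ gives $\ovl{V}\cap \ovl{x}\neq\varnothing$. As $\ovl{x}$ runs over all of $X$, this shows that $\ovl{V}$ also meets every line of $X$. Conjugation preserves inclusions of varieties, so minimality also transfers and $\ovl{V}$ is again focal; this is the last statement of Lemma~\ref{lemma: minimality of focal}. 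Hence conjugation permutes the set of focal varieties of $X$, and it preserves dimension and degree. It therefore suffices to show that, in each class, every focal variety is determined uniquely by these invariants among the focal varieties of $X$.

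\textbf{Class $C$.} There is a single focal point $P$. Conjugation sends $P$ to a focal point, which must be $P$ itself, so $P$ is real.

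\textbf{Class $A.3$.} There is a single focal line $x$. Then $\ovl{x}$ is a focal line, so $\ovl{x}=x$. Here I will invoke the description of $A.3$ as the flat limit $y\leadsto x$ to justify that no other focal curve exists.

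\textbf{Class $B$.} The focal set consists of a plane conic $C$ and a line $L$ meeting $C$. Conjugation sends $C$ to a focal conic and $L$ to a focal line. Because there is exactly one focal conic and one focal line, $\ovl{C}=C$ and $\ovl{L}=L$. It follows that the plane of $C$ is real, and the intersection point $C\cap L$ is real whenever it is unique.

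The main obstacle is justifying that the focal configuration is genuinely unique: that in classes $A.3$ and $B$ there is no second focal curve, for instance a second line or conic that $\ovl{L}$ or $\ovl{C}$ could be sent to. I would argue this via the degree count in $Q$. A line congruence through which every line meets two distinct lines $L,L'$ is contained in the linear congruence $\{x: x\wedge L=x\wedge L'=0\}$. If $X$ were of class $A.3$ or $B$ and had an additional focal line $\ovl{L}\neq L$, then $X$ would coincide with the linear congruence having $L$ and $\ovl{L}$ as focal lines. That congruence is hyperbolic or elliptic, which contradicts the class assumption, since the classes in Definition~\ref{def: types of LC} are taken to be mutually exclusive (a linear congruence of type $A.1$/$A.2$ has two distinct focal lines and no focal conic). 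The same argument rules out a second conic in class $B$, because the lines meeting two conics and a line do not form a surface in $Q$ of the required kind. With uniqueness established, the conjugation argument above completes the proof.
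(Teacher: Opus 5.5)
Your argument is essentially the paper's: the last part of Lemma~\ref{lemma: minimality of focal} makes conjugation permute the focal varieties of $X$, and since classes $A.3$ and $C$ have a single focal variety while class $B$ has exactly one focal conic and one focal line, each focal variety must equal its conjugate. The paper takes this uniqueness directly from Definition~\ref{def: types of LC}, so your final paragraph is not needed. As written it is also slightly off: when $\ovl{L}$ meets $L$, the lines meeting both form a star plus a planar field rather than an $A.1$/$A.2$ congruence (the contradiction with the class still holds), and the two-conic case is only asserted, not argued.
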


\begin{proof}
Real line congruences of classes $A.3$ and $C$ have only one focal variety $V$, so Lemma \ref{lemma: minimality of focal} readily implies $\ovl{V} = V$. 
For class $B$, $V$ is either a plane conic or a line. 
Since $\ovl{V}$ is the same curve as $V$ and there is only one of each, we conclude $\ovl{V} = V$. 
\end{proof}

\subsection{Real Linear Congruences}

Next, we recall two classical results on the geometry of real lines in complex 3-space. 
Since these results are well known, we state them without proof.

\begin{lemma}
\label{lemma: types of complex conjugate lines}
Let $\ell,\ell'\subset \P_\C^3$ be two complex-conjugate lines. 
Then, one of the following holds:  
\begin{enumerate}
\item The lines $\ell,\ell'$ are equal, hence both real. 
\item The lines $\ell,\ell'$ are skew and have no real points. 
\item The lines $\ell,\ell'$ intersect at their unique real point. 
\end{enumerate}
\end{lemma}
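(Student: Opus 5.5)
The plan is to work with the intersection $\ell\cap\ell'$ and exploit the fact that complex conjugation is an involution of $\P_\C^3$ that exchanges $\ell$ and $\ell'=\ovl{\ell}$. We split into three cases according to whether $\ell\cap\ell'$ is the whole line, empty, or a single point. These exhaust all possibilities for two lines in $\P_\C^3$, and each case matches one item of the statement.

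\emph{Case 1: $\ell=\ell'$.} Then $\ovl{\ell}=\ell$, so the line is real; this is item 1. For later use, note that a real line (invariant under conjugation) contains real points. Indeed, pick two points $a,b$ spanning $\ell$; the points $a+\ovl{a}$ and $i(a-\ovl{a})$ are real vectors lying in the span $\ell$, and they are not both zero. In fact the real points form a real projective line.

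\emph{Case 2: $\ell\neq\ell'$ and they are skew.} We must show that $\ell$ has no real points. Suppose $P\in\ell$ is real. Then $P=\ovl{P}\in\ovl{\ell}=\ell'$, so $P\in\ell\cap\ell'$, which contradicts skewness. Hence neither line has a real point, which gives item 2.

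\emph{Case 3: $\ell\neq\ell'$ and they meet.} Two distinct lines meeting in $\P^3$ meet in exactly one point $P$. Since $\ell\cap\ell'$ is mapped to $\ovl{\ell}\cap\ovl{\ell'}=\ell'\cap\ell$ by conjugation, the unique point $P$ satisfies $\ovl{P}=P$, so $P$ is real. Conversely, any real point $R\in\ell$ lies in $\ovl{\ell}=\ell'$ by the argument of Case 2, so $R\in\ell\cap\ell'=\{P\}$. Thus $P$ is the unique real point of each line, which gives item 3.

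No step is a genuine obstacle. The only point requiring care is the "unique real point" claim in Case 3, and this is settled by the same observation that drives Case 2: a real point of $\ell$ automatically lies on $\ovl{\ell}$. It is worth recording explicitly that $\ovl{\ell}$ is again a line, because conjugating the spanning points or the Plücker coordinates gives a decomposable vector. Hence $\ell\cap\ell'$ is a linear subspace of $\P_\C^3$, which justifies the trichotomy used at the start.
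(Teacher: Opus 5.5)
Your proof is correct and complete. The paper gives no proof of this lemma (it is stated as a well-known classical fact), so there is no route to compare against. Your argument is the standard one: the trichotomy equal/meeting/skew, the observation that any real point of $\ell$ lies on $\ovl{\ell}=\ell'$, and the fact that conjugation fixes $\ell\cap\ell'$.

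The only step you leave implicit is in Case 3. There you conclude that a point $P=[v]$ with $\ovl{P}=P$ has a real coordinate vector, which is what membership in the real locus requires. This follows because $\ovl{v}=\lambda v$ forces $|\lambda|=1$, and then for $\lambda=e^{i\theta}$ the rescaled vector $e^{i\theta/2}v$ is real.
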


\begin{lemma}
\label{lemma: skew lines}
Let $\ell,\ell'\subset \P_\C^3$ be skew lines. 
Then, the line congruence 
$$
X 
\equiv 
\{ x \subset \P_\C^3 : x \simeq \P_\C^1 ,\,  \ell\cap x \not= \varnothing, \ell'\cap x\not= \varnothing\}
$$
is real if and only if one of the following holds: 
\begin{enumerate}
\item $\ell,\ell'$ are both real (hyperbolic linear $(A.1)$). 
\item $\ell,\ell'$ are complex-conjugate (elliptic linear $(A.2)$). 
\end{enumerate}
\end{lemma}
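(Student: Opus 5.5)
The statement to prove is Lemma~\ref{lemma: skew lines}. I will argue directly with complex conjugation acting on $\P_\C^3$ and on the Klein quadric $Q$. The congruence $X$ of lines meeting two skew lines $\ell,\ell'$ is the intersection of $Q$ with the two tangent hyperplanes $\ell\wedge x=0$ and $\ell'\wedge x=0$ from \eqref{eq: intersecting lines}. So $X$ is a smooth quadric surface in a $3$-plane of $\P_\C^5$, and it determines the pair $\{\ell,\ell'\}$.

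\textbf{Recovering the pair from $X$.} I will show that $\ell$ and $\ell'$ are the only lines meeting every line of $X$. Suppose a line $m$ meets all lines of $X$ and $m\neq\ell,\ell'$. Pick a point $P\in\ell$ not on $m$. The lines of $X$ through $P$ form the pencil of lines through $P$ in the plane spanned by $P$ and $\ell'$. All of these lines would have to meet $m$, so $m$ lies in that plane. Varying $P$ along $\ell$ gives different planes through $\ell'$, so $m$ lies in their intersection, which is $\ell'$. This is a contradiction. Consequently $\{\ell,\ell'\}$ is exactly the set of focal lines of $X$ in the sense of Definition~\ref{def: focal variety}. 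In particular, $X$ is invariant under conjugation if and only if the set $\{\ell,\ell'\}$ is. This holds precisely when both lines are real, or when $\ovl{\ell}=\ell'$.

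\textbf{Direction ($\Rightarrow$).} Assume $X$ is real. Then $X=\ovl{X}$. The lines meeting every line of $\ovl X$ are $\ovl\ell,\ovl{\ell'}$, so $\{\ovl\ell,\ovl{\ell'}\}=\{\ell,\ell'\}$, and we land in case 1 or case 2.

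\textbf{Direction ($\Leftarrow$).} In both cases $X=\ovl X$, and it remains to check that $X(\R)$ has real dimension $2$.
\begin{itemize}
\item In case 1, take real points $P\in\ell$ and $P'\in\ell'$. They give a real line $PP'\in X$, and the map $(P,P')\mapsto PP'$ gives a family of real lines of real dimension $2$.
\item In case 2, Lemma~\ref{lemma: types of complex conjugate lines} says $\ell$ has no real points. For each $P\in\ell$, the line $P\ovl P$ is real, because it is conjugation invariant. It is a genuine line, since $P\neq\ovl P$, and it meets both $\ell$ and $\ovl\ell=\ell'$. The map $P\mapsto P\ovl P$ is injective on $\ell\simeq\P^1_\C$, a real $2$-dimensional family. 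Injectivity holds because a line meeting two skew lines meets each of them in exactly one point.
\end{itemize}

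\textbf{Main obstacle.} The only delicate point is identifying the focal pair intrinsically from $X$. This is what makes the conjugation argument in ($\Rightarrow$) legitimate, and it is handled by the pencil argument above.
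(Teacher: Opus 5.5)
Your proof is correct. The paper gives no proof to compare against: the lemma is stated, together with Lemma~\ref{lemma: types of complex conjugate lines}, as a classical fact ``without proof''. Your argument therefore fills an omission rather than offering an alternative to an existing one.

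Two features of your route are worth keeping.
\begin{itemize}
\item The pencil argument shows that $\ell,\ell'$ are the only lines meeting every line of $X$. This is exactly the uniqueness of the focal pair of a linear congruence that the paper uses tacitly later, for instance in the steps ``we obtain $c=\ovl{a}$'' in the classification proofs for types $(1,1,1)$ and $(1,2,2)$. It is the right statement to isolate.
\item Your $(\Leftarrow)$ direction actually checks the condition $\dim_\R X(\R)=2$ of Definition~\ref{def: LC and real LC}, which the paper never addresses. In the elliptic case you do this via the classical description of the real lines as the joins $P\ovl{P}$, $P\in\ell$.
\end{itemize}

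A Plücker-coordinate proof of $(\Rightarrow)$, closer to the paper's language, goes as follows. $X$ is a smooth quadric surface spanning the $3$-plane $\Pi$ cut out by $\ell\wedge x=\ell'\wedge x=0$. The Klein form has real coefficients, so conjugation commutes with polarity. Hence $X=\ovl{X}$ forces $\Pi$ to be real, and then also its polar line, which is the line through the points $\ell,\ell'\in\P^5_\C$. Conjugation must then permute the two points $\ell,\ell'$ where this line meets the Klein quadric. This is shorter, but it relies on the smoothness and spanning facts you only mention in passing. Your synthetic argument avoids them.

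Minor tightening:
\begin{itemize}
\item In case~1, state that $(P,P')\mapsto PP'$ is injective. The reason is the same as in case~2: a line of $X$ is neither $\ell$ nor $\ell'$, so it meets each of them in exactly one point.
\item Record the upper bound $\dim_\R X(\R)\le 2$, which follows from $\dim_\C X=2$.
\item In case~2, you do not need the cited lemma to see that $\ell$ has no real points: a real point of $\ell$ would also lie on $\ovl{\ell}=\ell'$, contradicting skewness.
\end{itemize}
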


\section{Classification of Real Line Congruences}\label{sec: classification}

In this section we present a complete classification of the real parametric line congruences arising from a real trilinear birational map. 
We proceed systematically through all possible types, namely $(1,1,1)$, $(1,1,2)$, $(1,2,2)$, and $(2,2,2)$. 
For each type, the analysis follows the same scheme. 
First, we give explicit formulas for the syzygies of the defining polynomials of general trilinear birational maps and use them to obtain explicit parameterizations of the parametric line congruences. 
Next, we use these parameterizations to describe the focal varieties of the general classes. 
Finally, we consider all possible degenerations in order to complete the classification.

\subsection{Real line congruences of trilinear birational maps of type $(1,1,1)$} 

In this section, we focus on real trilinear birational maps of type $(1,1,1)$, that is, maps whose inverse is defined by polynomials that are linear in each factor of $\mathbb{P}^1_{\mathbb{C}} \times \mathbb{P}^1_{\mathbb{C}} \times \mathbb{P}^1_{\mathbb{C}}$. 
Only for this type, we do not need to assume that $\phi$ is general. 
The reason is that we can provide explicit parameterizations of the syzygies in both the general and degenerate cases. 

We begin by providing an explicit parametrization of the associated parametric line congruences which arise from the syzygies of the map. 

\begin{mypropB} 
\label{propB: (1,1,1)}
Let $\phi$ be a trilinear birational map of type $(1,1,1)$. 
Then, the parametric line congruences admit the parameterizations $\P_\C^1\times\P_\C^1 \dashrightarrow Q$ given by  
\begin{eqnarray}
\label{eq: lc 1 (1,1,1)}
S \equiv &  
(t_0 \, B_0 + t_1 \, B_1)
\wedge 
(u_0 \, C_0 + u_1 \, C_1)
\ ,
\\
\label{eq: lc 2 (1,1,1)}
T \equiv &  
(s_0 \, A_0 + s_1 \, A_1)
\wedge 
(u_0 \, C_0 + u_1 \, C_1)
\ ,
\\
\label{eq: lc 3 (1,1,1)}
U \equiv  &  
(s_0 \, A_0 + s_1 \, A_1)
\wedge 
(t_0 \, B_0 + t_1 \, B_1)
\ \phantom{,}
\end{eqnarray}
for some $A_i, B_j , C_k \in (\C^4)^\vee$ such that any four are linearly independent\footnote{Here, $E = \{A_0,A_1,B_0,B_1,C_0,C_1\}$ is isomorphic to the uniform matroid $U_{4,6}$.}. 
\end{mypropB}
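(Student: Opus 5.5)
The plan is to derive the parameterizations from the structure of the inverse map. Since $\phi$ has type $(1,1,1)$, the components of $\phi^{-1}$ are linear forms: $\phi^{-1}(x) = (\sigma_0:\sigma_1)\times(\tau_0:\tau_1)\times(\upsilon_0:\upsilon_1)$ with $\sigma_i,\tau_j,\upsilon_k\in(\C^4)^\vee$. We set $A_0=\sigma_1$, $A_1=-\sigma_0$, and define $B_j$ and $C_k$ in the same way from $\tau$ and $\upsilon$. The identity $\phi^{-1}\circ\phi=\mathrm{id}$ gives $(\sigma_0(\phi):\sigma_1(\phi))=(s_0:s_1)$. Hence
$$s_0\,A_0(\phi)+s_1\,A_1(\phi)=s_0\sigma_1(\phi)-s_1\sigma_0(\phi)=0$$
identically. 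So for each fixed $(s_0:s_1)$ the plane $s_0A_0+s_1A_1$ contains the surface $\phi(\{(s_0:s_1)\}\times\P_\C^1\times\P_\C^1)$. These are exactly the linear syzygies (moving planes) of $\phi$ mentioned earlier, and the same holds for $t$ and $u$.

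Next we read off the lines. An $s$-line with fixed $(t_0:t_1)\times(u_0:u_1)$ lies in both planes $t_0B_0+t_1B_1$ and $u_0C_0+u_1C_1$. If these two planes are distinct, their intersection is a line, and it must equal the $s$-line: the $s$-line is a genuine line for generic $(t,u)$, since $\phi$ is birational and so its image is not contracted. By the identification of Plücker coordinates via planes (Section~\ref{subsec: plucker coordinates}), the line is therefore given by $(t_0B_0+t_1B_1)\wedge(u_0C_0+u_1C_1)$. This proves \eqref{eq: lc 1 (1,1,1)}, and \eqref{eq: lc 2 (1,1,1)}, \eqref{eq: lc 3 (1,1,1)} follow symmetrically.

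The main obstacle is the independence claim: any four of $A_0,A_1,B_0,B_1,C_0,C_1$ must be linearly independent. I would argue by contradiction, distinguishing how the four forms are distributed among the three pairs.

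\emph{Two from one pair.} Suppose the four forms are $A_0,A_1,B_0,C_0$, or more generally $A_0,A_1$ together with forms $\beta$ from the $B$-pencil and $\gamma$ from the $C$-pencil. A dependence means the pencil $\langle A_0,A_1\rangle$ meets the span of $\beta$ and $\gamma$. Then the three planes $\beta$, $\gamma$ and some member $s_0A_0+s_1A_1$ of the $A$-pencil share a common line rather than a single point. But $\phi^{-1}$ maps a generic point to the unique $(s,t,u)$ determined by these three planes, and $\phi$ maps $(s,t,u)$ to their common point. Since $\phi$ is a morphism off its base locus and birational, the three planes must meet in a single point for generic choices. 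A dependence forces them to share a line for a whole pencil of choices, so a whole line would map to a single triple, contradicting injectivity on an open set.

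\emph{Lower-rank cases.} If $A_0,A_1$ were themselves dependent, then $\sigma$ would be constant, which is impossible. The same argument rules out any dependence among three of the forms.

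\emph{One from each pair plus one extra.} A dependence among $A_0,A_1,B_0,C_0$-type quadruples is handled as in the first case. Up to relabelling, every quadruple has the form covered there: two forms from one pair and one from each of the others, or two from each of two pairs. The case of two pairs, say $A_0,A_1,B_0,B_1$ dependent, means the pencils $\langle A\rangle$ and $\langle B\rangle$ share a plane $P$. Then $P$ is an $s$-plane and also a $t$-plane, so points of $P$ have both a fixed $s$ and a fixed $t$. The surface $\phi(s=\text{const})$ would then be contained in a $t$-level set, a contradiction.

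I expect the delicate part to be making the "common point for generic choices" argument rigorous. To do so I would compute $\phi$ explicitly as $\iota$-contractions of $(s_0A_0+s_1A_1)\wedge(t_0B_0+t_1B_1)$ with $u_0C_0+u_1C_1$. This expresses $\phi$, up to scalar, as the trilinear $4\times4$ "cross product" of the three planes. That expression vanishes identically exactly when a dependence of the above kind holds, which is excluded because the $f_i$ have no common factor and $\phi$ is dominant.
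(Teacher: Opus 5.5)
Your first two steps are correct. Setting $A_0=\sigma_1$, $A_1=-\sigma_0$ and using $\phi^{-1}\circ\phi=\mathrm{id}$ derives the linear syzygies directly from the linearity of $\phi^{-1}$; the paper instead imports them from \cite[Proposition~6.2]{trilinear}. Reading off the $s$-line as the intersection of the planes $\tau=0$, $\upsilon=0$ is exactly the paper's argument.

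The gap is the independence claim, and it cannot be closed, because for non-general maps of type $(1,1,1)$ it is false. Take
\[
\phi=\bigl(-s_1t_1u_0 : s_0t_1u_0 : -s_0t_0u_0 : u_1(s_0t_0+s_1t_1)\bigr),
\]
whose components are coprime. Its inverse is $(x_1:-x_0)\times(-x_2:x_1)\times(x_0+x_2:-x_3)$: one checks $\phi^{-1}\circ\phi=\mathrm{id}$ and $\phi\circ\phi^{-1}=x_1(x_0+x_2)\cdot\mathrm{id}$. With your normalization, $A_0=-x_0$, $A_1=-x_1$, $B_0=x_1$, $B_1=x_2$. So already $A_1=-B_0$, and $A_0\wedge A_1\wedge B_0\wedge B_1=0$ for any choice of bases, since the two pencils are determined by $\phi$. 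This map lies in class~2 of Theorem~D.\ref{theoD: (1,1,1)}: the lines $a,b$ meet at $(0:0:0:1)$. Classes 3 and 4 likewise violate the $U_{4,6}$ condition, so the independence can only hold for general $\phi$.

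Accordingly, each of your contradictions evaporates.
\begin{itemize}
\item A dependence among $A_0,A_1,B_0,C_0$ yields a single $s$ with $s_0A_0+s_1A_1\in\langle B_0,C_0\rangle$. That gives one triple at which the three planes share a line, i.e.\ a base point of $\phi$ and a line contracted by $\phi^{-1}$, not ``a whole pencil of choices''. Birationality does not forbid this.
\item In the two-pairs case, the common plane $\{x_1=0\}$ of the example corresponds to $s=(0:1)$. But $\phi(\{(0:1)\}\times\P_\C^1\times\P_\C^1)$ is not a surface; it is the line $b=\{x_1=x_2=0\}$, which lies in the base locus of $(\tau_0:\tau_1)$. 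So nothing is forced into a $t$-level set.
\item Your claim about three forms fails too: in the example even two forms are proportional.
\item The ``cross product'' claim fails: $(s_0A_0+s_1A_1)\wedge(t_0B_0+t_1B_1)\wedge(u_0C_0+u_1C_1)$ does not vanish identically under such a dependence. It merely acquires base points while its coordinates stay coprime, as in the example.
\end{itemize}

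What you can prove is the statement for general $\phi$. By your first step, maps of type $(1,1,1)$ correspond to the six-tuples $(A_0,\dots,C_1)$ whose triple product has nonzero coprime coordinates. Both that condition and the $U_{4,6}$ condition are nonempty Zariski-open conditions on the irreducible space $((\C^4)^\vee)^6$, so their intersection is nonempty and $U_{4,6}$ holds generically. Alternatively, cite \cite[Proposition~6.2]{trilinear} as the paper does, restricted to general maps.
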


\begin{proof} 
By \cite[Proposition $6.2$]{trilinear}, the defining polynomials $\Phi = (f_0,f_1,f_2,f_3)$ of a trilinear birational map of type $(1,1,1)$ admit linear syzygies 
\begin{equation}
\label{eq: syzygies (1,1,1)}
\sigma =  s_0 \, A_0 + s_1 \, A_1 \ \ ,\ \ 
\tau =  t_0 \, B_0 + t_1 \, B_1 \ \ ,\ \
\upsilon = u_0 \, C_0 + u_1 \, C_1
\end{equation}
for some $A_i, B_j, C_k\in (\C^4)^\vee$ with any four linearly independent.  
Since the syzygies $\tau$ and $\upsilon$ do not depend on the variables $s_0,s_1$, it follows that the $s$-line $\ell_{p}$ associated to $p = (\beta_0:\beta_1)\times (\gamma_0:\gamma_1) \in (\P_\C^1)^2$ lies in the two planes $\tau(\beta_0,\beta_1) = 0$ and $\upsilon(\gamma_0,\gamma_1) = 0$. 
Therefore, its Plücker coordinates are $\ell_p\equiv \tau\wedge\upsilon$, and \eqref{eq: lc 1 (1,1,1)} follows. 
With the obvious modifications, the parameterizations of $T$ and $U$ follow as well.
\end{proof}

\begin{mypropC} 
\label{propC: (1,1,1)}
Let $\phi$ be a general trilinear birational map of type $(1,1,1)$. 
Then, there are unique pairwise skew lines $a,b,c\subset\P_\C^3$ such that: 
\begin{enumerate}[itemsep=2pt]
\item The focal lines of $S$ are $b$ and $c$. 
\item The focal lines of $T$ are $a$ and $c$. 
\item The focal lines of $U$ are $a$ and $b$. 
\end{enumerate}
\end{mypropC}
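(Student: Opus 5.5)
The plan is to read off the focal lines directly from the parameterizations in Proposition \textbf{B}.\ref{propB: (1,1,1)}. Set $a$ to be the line cut out by the two planes $A_0=0$ and $A_1=0$, i.e. $a \equiv A_0\wedge A_1$ in dual Plücker coordinates. Define $b\equiv B_0\wedge B_1$ and $c\equiv C_0\wedge C_1$ in the same way. Since $A_0,A_1$ are linearly independent, $a$ is a genuine line, and likewise for $b$ and $c$.

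The first step is to show that $a,b,c$ are pairwise skew. Two lines given as intersections of plane pairs meet exactly when the four planes have a common point. By the contraction/incidence criterion \eqref{eq: intersecting lines}, this happens exactly when $A_0\wedge A_1\wedge B_0\wedge B_1=0$, i.e. when the four linear forms are linearly dependent. The uniform matroid condition (any four of the six forms are independent) rules this out, so $a\cap b=\varnothing$, and similarly for the other two pairs.

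Next I check the focal property for $S$. A line of $S$ is $\ell=\tau\wedge\upsilon$ with $\tau=t_0B_0+t_1B_1$ and $\upsilon=u_0C_0+u_1C_1$. The plane $\tau=0$ belongs to the pencil of planes through $b$, so it contains $b$. Since $\ell$ and $b$ both lie in the plane $\tau=0$, they meet. The same argument with $\upsilon=0$, a plane through $c$, shows that $\ell$ meets $c$. Hence every line of $S$ meets both $b$ and $c$.

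Conversely, any line $x$ meeting both skew lines $b$ and $c$ spans a plane with $b$ and a plane with $c$. These planes have the forms $\tau=0$ and $\upsilon=0$ for suitable parameters $(t_0:t_1)\times(u_0:u_1)$, so $x=\tau\wedge\upsilon$. Therefore $S$ is exactly the hyperbolic-type linear congruence of all lines meeting $b$ and $c$, which in particular is a surface, i.e. a genuine line congruence. The cases of $T$ (via $a$ and $c$) and $U$ (via $a$ and $b$) follow symmetrically.

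Minimality and uniqueness come next. By Lemma \ref{lemma: minimality of focal}, the curve $b$ is focal unless $S$ is degenerate. It is not degenerate, because the lines meeting two skew lines do not all pass through a single point. The same holds for $c$. For uniqueness of the focal lines of $S$, suppose a line $d\neq b,c$ met every line of $S$. Through a general point $P\in b$ the lines of $S$ through $P$ sweep out the plane spanned by $P$ and $c$. So $d$ would have to lie in, or meet, every such plane, and also every plane spanned by $b$ and a point of $c$. This forces $d\subset b\cup c$, a contradiction. Thus $b,c$ are the only focal lines of $S$, and uniqueness of $a,b,c$ follows, since each is recovered as the common focal line of two of the congruences (e.g. $a$ is the focal line shared by $T$ and $U$).

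I expect the main obstacle to be this uniqueness/minimality argument. The paper's notion of ``focal variety'' is set-theoretic and only loosely pinned down, so I would make it rigorous by the pencil-sweeping argument just given rather than through any differential-geometric focal theory. The hypothesis ``general'' seems unnecessary beyond the matroid condition already granted by Proposition \textbf{B}.\ref{propB: (1,1,1)}.
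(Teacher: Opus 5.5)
Your proposal is correct and follows the paper's proof: the same $a,b,c$, skewness from four-independence, and incidence via the pencils $\tau=0$, $\upsilon=0$ through $b$, $c$, which is the geometric form of $b\wedge S=c\wedge S=0$. Your converse, non-degeneracy and uniqueness steps fill in what the paper only asserts, though your closing remark is off: the intersecting-line classes of Theorem~D for type $(1,1,1)$ show that four-independence is exactly what ``general'' secures, so that hypothesis is not superfluous.

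One fix in the uniqueness step: a line always meets a plane, so the right dichotomy is that $d$ either lies in $\langle P,c\rangle$ or passes through $P$. Since $d\neq b$, the former holds for all but at most one $P\in b$, which forces $d\subset\bigcap_P\langle P,c\rangle=c$.
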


\noindent See also Fig.~\ref{fig:716}.

\begin{figure}\centering\setlength{\unitlength}{1mm}\footnotesize
  \begin{picture}(0,0)
    \put(12,3){$a$}\put(5,13){$b$}\put(10,45){$c$}
    \put(8,23){$S$}\put(80,22){$T$}\put(56,20){$U$}
  \end{picture}
  \includegraphics[width=.5\textwidth]{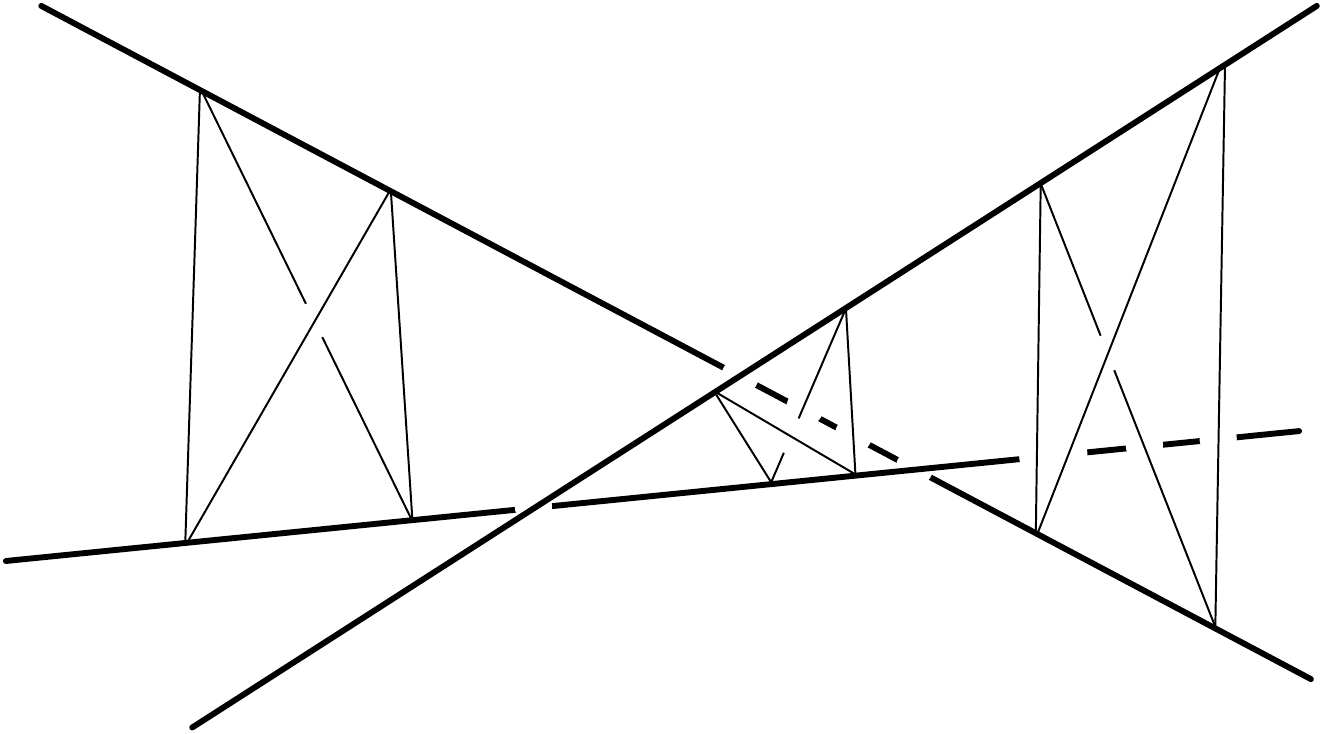}
  \caption{Focal lines (thick lines) and line congruences (thin lines) for type $(1,1,1)$, class 1.}
  \label{fig:716}
\end{figure}

\begin{proof}
We start from the parameterizations \eqref{eq: lc 1 (1,1,1)}, \eqref{eq: lc 2 (1,1,1)} and \eqref{eq: lc 3 (1,1,1)} of Proposition B$.$\ref{propB: (1,1,1)}. 
Consider the lines $a,b,c\subset \P_\C^3$ with Plücker coordinates
\begin{equation}
\label{eq: focal lines (1,1,1)}
a \equiv A_0\wedge A_1 
\ , \ 
b \equiv B_0\wedge B_1
\ , \ 
c \equiv C_0\wedge C_1
\ . 
\end{equation}
The three exterior products 
$$
a\wedge b
\ \ ,\ \ 
a\wedge c
\ \ ,\ \ 
b\wedge c
$$
are nonzero, since any four vectors among $A_i,B_j,C_k$ are linearly independent. 
Therefore, the lines $a,b,c$ are pairwise skew. 

We now prove that every $\ell\in S$ meets both $b$ and $c$. 
In particular, $S$ is linear with focal lines $b$ and $c$. 
Recall from \eqref{eq: intersecting lines} that $\ell$ intersects $b$ if and only if $b \wedge \ell = 0$.  
In particular, the parametrization \eqref{eq: lc 1 (1,1,1)} yields 
\begin{equation*}
b \wedge S
= 
(B_0 \wedge B_1) 
\wedge 
(t_0 \, B_0 + t_1 \, B_1)
\wedge 
(u_0 \, C_0 + u_1 \, C_1)
= 0
\end{equation*}
as each term in the expansion repeats either $B_0$ or $B_1$.  
Similarly, it follows that $c\wedge S = 0$. 
Therefore, $b$ and $c$ intersect every line in $S$. 
The remaining two statements for $T$ and $U$ are derived from the same arguments.  
\end{proof}

The following is the classification of the parametric line congruences of real trilinear birational maps of type $(1,1,1)$.

\begin{mytheoremD}  
\label{theoD: (1,1,1)}
Let $\phi$ be a general real trilinear birational map of type $(1,1,1)$, and let $a,b,c\subset \P_\C^3$ be the lines in Proposition C$.$\ref{propC: (1,1,1)}. 
Then, the three lines are real. 

Moreover, every (possibly degenerate) real parametric line congruence falls (up to permutation of the factors of $\P_\C^1\times\P_\C^1\times\P_\C^1$) into one of the following classes:
\begin{enumerate}[itemsep=2pt]
\item (General) The lines $a,b,c$ are pairwise skew:  
\begin{enumerate}[itemsep=2pt]
\item $S$ is hyperbolic linear $(A.1)$ with focal lines $b$ and $c$. 
\item $T$ is hyperbolic linear $(A.1)$ with focal lines $a$ and $c$. 
\item $U$ is hyperbolic linear $(A.1)$ with focal lines $a$ and $b$. 
\end{enumerate} 
\item The lines $a,b$ intersect at a point, and $c$ is skew to both:   
\begin{enumerate}[itemsep=2pt]
\item $S$ is hyperbolic linear $(A.1)$ with focal lines $b$ and $c$. 
\item $T$ is hyperbolic linear $(A.1)$ with focal lines $a$ and $c$. 
\item $U$ is degenerate $(C)$ with a single focal point  $a\cap b$. 
\end{enumerate} 
\item The lines $a,b$ are skew, and $c$ intersects both at a point: 
\begin{enumerate}[itemsep=2pt]
\item $S$ is degenerate $(C)$ with a single focal point $b\cap c$. 
\item $T$ is degenerate $(C)$ with a single focal point $a\cap c$. 
\item $U$ is hyperbolic linear $(A.1)$ with focal lines $a$ and $b$. 
\end{enumerate} 
\item The three lines $a,b,c$ are distinct and coplanar: 
\begin{enumerate}[itemsep=2pt]
\item $S$ is degenerate $(C)$ with a single focal point $b\cap c$. 
\item $T$ is degenerate $(C)$ with a single focal point $a\cap c$. 
\item $U$ is degenerate $(C)$ with a single focal point $a\cap b$. 
\end{enumerate} 
\end{enumerate} 
\end{mytheoremD}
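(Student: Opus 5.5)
Since $\phi$ is real, the coordinate functions $f_0,\dots,f_3$ are real polynomials. The first step is to show that the syzygies in Proposition B.\ref{propB: (1,1,1)} can be chosen real. The module of syzygies of $(f_0,\dots,f_3)$ that are linear in $s$ and of degree zero in $t,u$ is the kernel of a real linear system, so it is defined over $\R$. By \cite[Proposition $6.2$]{trilinear} it is nonzero and is spanned by $s_0A_0+s_1A_1$; the same argument applied to its conjugate shows that this span is invariant under complex conjugation. Hence it has a real basis, and $A_0,A_1$ (and likewise $B_j$, $C_k$) may be taken in $(\R^4)^\vee$. A cleaner, more geometric form of the argument uses the line $a$ directly. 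The planes $\sigma(s)=0$ are exactly the planes containing the $t$- and $u$-lines, so the pencil $\{\sigma=0\}$ is intrinsic to $\phi$. Conjugation sends this pencil to the corresponding pencil of $\ovl{\phi}=\phi$, and therefore fixes its axis $a=A_0\wedge A_1$. By Lemma \ref{lemma: types of complex conjugate lines}, case 1, $a$ is real, and the same holds for $b$ and $c$. Alternatively, the realness follows from Lemma \ref{lemma: minimality of focal}: $\ovl{b}$ is focal for the real congruence $S$, so $\ovl{b}\in\{b,c\}$. Because of this ambiguity, I prefer the syzygy argument, which fixes each line individually.

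For the classification I would work with the real Plücker vectors $a,b,c$ and split into cases by which of the products $a\wedge b$, $a\wedge c$, $b\wedge c$ vanish. For a general $\phi$ all three are nonzero by Proposition C.\ref{propC: (1,1,1)}. Degenerations are the specializations in which the uniform-matroid condition on $E$ fails but $\phi$ stays birational of type $(1,1,1)$. The computation $b\wedge S=c\wedge S=0$ from the proof of Proposition C.\ref{propC: (1,1,1)} uses no independence hypothesis, so it remains valid. Thus $S$ always consists of lines meeting $b$ and $c$; when $b,c$ are skew and real, $S$ is hyperbolic linear (A.1). When $b$ and $c$ meet, $B_0,B_1,C_0,C_1$ span only a $3$-space, so they have a common zero $P$. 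Then every $S$-line $\tau\wedge\upsilon$ passes through $P$, and since $S$ is two-dimensional it is the full star at $P$, which is class $C$ with focal point $b\cap c$. By Lemma \ref{lemma: minimality of focal}, in the case where the focal curve reduces to a point, this is the degenerate alternative. The same reasoning applies to $T$ and $U$.

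It remains to list the configurations of three real lines that meet pairwise in various ways and to exclude the impossible ones. The lines are distinct: if $b=c$, the $s$-lines would all be the single line $b$. If all three pass through a common point $P$, then $A_i,B_j,C_k$ all vanish at $P$. In that case every plane of every syzygy contains $P$, which contradicts the birationality/dominance of $\phi$ (compare Remark \ref{remark: no planar LCs}); I would argue this by showing that the six forms then span at most a $3$-space, making $\phi$ degenerate. So if exactly two pairs meet, the intersection points are distinct. Three pairwise-meeting lines with no common point are coplanar, which gives class 4. Two meeting pairs, say $a\cap c$ and $b\cap c$, with $a,b$ skew give class 3. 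One meeting pair gives class 2, and none gives class 1. Other placements are permutations of these.

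The main obstacle is justifying which degenerations really occur: it must be shown that the concurrent configuration is excluded, and that in each remaining case the map is still a birational trilinear map of type $(1,1,1)$. For this I would rely on \cite[Proposition $6.2$]{trilinear} together with explicit real examples. The minimal requirement, however, is only the exclusion argument, since the statement asserts that every case falls into one of the listed classes.
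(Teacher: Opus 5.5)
Your proposal is correct, and it takes a genuinely different route from the paper.

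\textbf{Realness.} The paper argues with focal lines only. If $a$ were non-real, $\ovl{a}$ would be focal for both $T$ and $U$, which forces $\ovl{a}=c$ and $\ovl{a}=b$ and contradicts $b\neq c$. So the ambiguity $\ovl{b}\in\{b,c\}$ that made you avoid this route disappears once you use a line shared by two congruences. Your pencil argument is intrinsic: the planes $\sigma(s)=0$ are the closures of $\phi(\{s\}\times(\P_\C^1)^2)$, so conjugation permutes them and fixes their axis. Your kernel version additionally needs the space of such syzygies to be one-dimensional; the pencil version avoids that. Unlike the paper's argument, yours also works in the degenerate classes. There some congruences no longer have two focal lines, and you need $a,b,c$ real to say ``hyperbolic'' rather than ``elliptic''.

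\textbf{Classification.} The paper argues via flat limits of pairs of focal lines, discards the parabolic limit because two congruences would coincide, and cites \cite[Theorem~5.1]{trilinear}. You instead do an exhaustive case analysis on which of $a\wedge b$, $a\wedge c$, $b\wedge c$ vanish. You identify each congruence directly, as the lines meeting two skew real lines or as the star at a real point. You exclude coincident lines because a congruence would collapse to a single line, which is also what rules out class $A.3$. You also explicitly exclude concurrence of $a,b,c$, which the paper leaves implicit.

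\textbf{Details to add.} First, state that syzygies $s_0A_0+s_1A_1$ etc.\ with $A_0,A_1$ independent exist for every map of type $(1,1,1)$, e.g.\ $s_1\sigma_0-s_0\sigma_1$ from the inverse. You need this because Proposition B$.$\ref{propB: (1,1,1)} as stated carries the $U_{4,6}$ condition, which fails precisely in the degenerate classes. Second, in the concurrent case, spell out that $\phi(s,t,u)$ is annihilated by $\sigma(s),\tau(t),\upsilon(u)$. Since the lines are distinct, these span the forms vanishing at $P$ for general parameters, so $\phi\equiv P$.

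Your approach gains rigor and gives realness uniformly across all classes. The paper's gains brevity and, through the citation, the existence of every class, which the ``falls into'' statement does not strictly require.
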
 

\begin{proof}
If $\phi$ is general, Proposition~C.\ref{propC: (1,1,1)} readily implies that the three parametric line congruences are linear $(A)$ with pairwise skew focal lines $a$, $b$ and $c$. 
We first prove that $a,b,c$ are real. 
Suppose that $a$ is non-real. 
Since $a$ is a focal line of both $T$ and $U$, Corollary~\ref{corollary: focal real} implies that $\overline{a}$ is also a focal line of the two line congruences. 
As the line $c$ (resp.~$b$) is also a focal line of $T$ (resp.~$U$), we obtain $c=\overline{a}$ (resp.~$b=\overline{a}$). 
This yields a contradiction with the skewness of $b$ and $c$. 
Therefore, the three lines $a,b,c$ are real. 

Next, we proceed with the classification. 
Proposition~C.\ref{propC: (1,1,1)} already describes the general class. 
Since all parametric line congruences in the general class have only focal lines, the following degenerations may occur: 
\begin{enumerate}
\item Two focal lines approach each other along a common normal direction, until they intersect at a point. 
In this case, we obtain a degenerate line congruence $(C)$.
\item Two focal lines approach each other along a smooth quadric until they become infinitely near (flat limit), yielding a double line contained in a smooth quadric. 
In this case, we obtain a parabolic linear congruence $(A.3)$.
\end{enumerate} 
However, the degeneration described in $2$ cannot occur. 
Indeed, if the focal lines $a$ and $b$ degenerate to a double line on a smooth quadric, then the line congruences $S$ and $T$ would have the same focal lines, and hence would coincide.
The remaining degenerations are precisely those listed in the statement. 
The fact that all of them occur as parametric line congruences of a trilinear birational map of type $(1,1,1)$ follows from \cite[Theorem~5.1]{trilinear}.
\end{proof}

\subsection{Real line congruences of trilinear birational maps of type $(1,1,2)$}

Next, we focus on real trilinear birational maps of type $(1,1,2)$. 
As before, aim to give an explicit parametrization of the parametric line congruences, which again relate to the syzygies of the birational map.
To this goal, we first describe the syzygies of a general map of this type. 
Unlike Proposition B$.$\ref{propB: (1,1,1)}, we now require the assumption that the map is general. 

\begin{mylemmaA}
\label{lemmaA: (1,1,2)}
Let $\phi$ be a general trilinear birational map of type $(1,1,2)$. 
The defining polynomials $\Phi = (f_0,f_1,f_2,f_3)$ admit syzygies of the form 
\begin{gather}
\label{eq: syzygies (1,1,2)}
\sigma =  a_0 \, A - a_1 \, C_0 \ \ ,\ \ 
\tau =  b_0 \, B - b_1 \, C_0 \ \ \\[4pt]
\gamma = a_1 c_0 \, B - a_0 c_0 \, C_0 + a_0 c_1 \, C_1
\ \ ,\ \ 
\delta = b_1 c_0 \, A - b_0 c_0 \, C_0 - b_0 c_1 \, C_1 \ \ .
\end{gather} 
for some linear $a_i = a_i(s_0,s_1)$, $b_j = b_j(t_0,t_1)$, $c_k = c_k(s_0,s_1) \in R$ 
and linearly independent $A,B,C_0,C_1\in (\C^4)^\vee$.  
\end{mylemmaA}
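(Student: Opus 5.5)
The plan is to start from the structure theorem for type $(1,1,2)$ in \cite{trilinear}, in the same way Proposition B.\ref{propB: (1,1,1)} relied on \cite[Proposition 6.2]{trilinear}. The first step is to recall from \cite{trilinear} the normal form of a general type $(1,1,2)$ map. There is a point $p\in\P_\C^3$, the common zero of $A,B,C_0$ in our notation, that is the image of a contracted surface. The inverse map has linear components $\sigma,\tau$ in $x$, and these define pencils of planes through $p$. The quadratic component $\upsilon$ is the only genuinely nonlinear part.

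Next, I would read off the two ``linear'' syzygies. Because the inverse has degree $1$ in the first factor, $(s_0:s_1)=(\sigma_0(x):\sigma_1(x))$ with $\sigma_i$ linear forms. Hence $s_1\sigma_0(\Phi)-s_0\sigma_1(\Phi)$ vanishes identically on the image. This gives a syzygy of $\Phi$ that is linear in $s$ and independent of $t,u$. Both $\sigma$ and $\tau$ pencils contain the plane through $p$ and the contracted locus, so I would pick a basis $A,C_0$ of the first pencil and $B,C_0$ of the second with a shared $C_0$. After a change of coordinates on $\P^1$, this writes the syzygies as $a_0A-a_1C_0$ and $b_0B-b_1C_0$ with $a_i,b_j$ linear.

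The two remaining syzygies come from the $u$-factor. Since $\upsilon$ is quadratic, the syzygy module needs further generators of degree $(1,1,1)$ in mixed form. For $\gamma$ and $\delta$, I would use the trick of eliminating $C_0$ between $\sigma$, $\tau$ and a suitable $u$-dependent plane. Take the fourth independent form $C_1$ and write the generic $u$-syzygy as an unknown combination $\lambda A+\mu B+\nu C_0+\rho C_1$, with coefficients multilinear in $(s,t,u)$ of low degree. The conditions that this is a syzygy modulo the span generated by $\sigma,\tau$ then force the displayed shape. Concretely, substituting $A(\Phi)=(a_1/a_0)C_0(\Phi)$ and $B(\Phi)=(b_1/b_0)C_0(\Phi)$ reduces everything to a single relation between $C_0(\Phi)$ and $C_1(\Phi)$. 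That relation has the form $c_0\,C_0(\Phi)=\pm c_1\,C_1(\Phi)$ times factors, which is precisely where the linear forms $c_k$ (in the $u$-variables, I expect, despite the statement) come from. Clearing denominators by $a_0$ or $b_0$ then yields $\gamma$ and $\delta$.

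The main obstacle is this last step. It requires actually pinning down, from the normal form in \cite{trilinear}, that $C_0(\Phi)$ and $C_1(\Phi)$ factor so that their ratio is a ratio of linear forms times $a$- and $b$-factors, with the signs matching the statement. It also requires using genericity to ensure that $A,B,C_0,C_1$ are linearly independent. I would verify the final identities by direct substitution into the explicit general parametrization, and check $\gamma(\Phi)=\delta(\Phi)=0$ symbolically.
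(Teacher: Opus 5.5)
\textbf{Gap: the explicit normal form is never supplied.} Your route is the paper's: take the normal form of a general type $(1,1,2)$ map from the earlier classification and check the relations by substitution. But the proposal never supplies that normal form, and it is the only substantive ingredient. The step you flag as the main obstacle is, in the paper, the entire content of the proof.

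The qualitative data you list (a surface contracted to $p$, two pencils sharing the plane $C_0$) do not determine $\langle C_1,\Phi\rangle$. The ansatz $\lambda A+\mu B+\nu C_0+\rho C_1$ with undetermined coefficients forces nothing until $\Phi$ is explicit. The paper imports from \cite[Corollary~4.9]{trilinear_siaga} linearly independent $A,B,C_0,C_1$ with
\[
\langle A,\Phi\rangle=a_1b_0c_1,\qquad
\langle B,\Phi\rangle=a_0b_1c_1,\qquad
\langle C_0,\Phi\rangle=a_0b_0c_1,\qquad
\langle C_1,\Phi\rangle=(a_0b_0-a_1b_1)c_0,
\]
where $c_k$ is linear in $(u_0,u_1)$, as you suspected. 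The ratio $\langle C_1,\Phi\rangle/\langle C_0,\Phi\rangle=c_0(a_0b_0-a_1b_1)/(c_1a_0b_0)$ contains the irreducible form $a_0b_0-a_1b_1$. So it is not a ratio of products of $a$-, $b$- and $c$-factors, as you expected, and this is exactly why $\gamma$ and $\delta$ involve $B$ and $A$.

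\textbf{With the normal form, your elimination works, but the signs cannot match.} Start from $a_0b_0c_1\langle C_1,\Phi\rangle=(a_0b_0-a_1b_1)c_0\langle C_0,\Phi\rangle$. Replacing $b_1\langle C_0,\Phi\rangle$ by $b_0\langle B,\Phi\rangle$ (i.e.\ using $\tau$) and cancelling $b_0$ gives $\gamma$ exactly as displayed. Replacing $a_1\langle C_0,\Phi\rangle$ by $a_0\langle A,\Phi\rangle$ (using $\sigma$) and cancelling $a_0$ gives $b_1c_0A-b_0c_0C_0+b_0c_1C_1$. So the displayed $\delta$ has a sign typo in its last term, and you should not aim to match it.

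This is independent of any normal form. If $\sigma,\tau$ are syzygies, then for the displayed $\gamma,\delta$ one gets $b_0\gamma(\Phi)-a_0\delta(\Phi)=2a_0b_0c_1\langle C_1,\Phi\rangle$. That is nonzero whenever $a_0b_0c_1\neq 0$, since a dominant map does not land in the plane $C_1=0$.

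\textbf{Optional variant via the inverse.} To stay closer to your derivation of $\sigma,\tau$ from the inverse, note that the same normal form gives the quadratic component $(c_0:c_1)=(C_0C_1:C_0^2-AB)$. Multiply the identity $c_1\,(C_0C_1)(\Phi)=c_0\,(C_0^2-AB)(\Phi)$ by $a_0$ (resp.\ $b_0$). Use $\sigma$ (resp.\ $\tau$) to factor out $C_0(\Phi)$, and cancel it. This yields $\gamma$ (resp.\ the corrected $\delta$).
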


\begin{proof}
It follows from \cite[Corollary $4.9$]{trilinear_siaga} that we can find linearly independent $A,B,C_0,C_1\in (\C^4)^\vee$ satisfying 
\begin{equation}
\langle 
A
, 
\Phi
\rangle
= 
a_1 b_0 c_1
\ \ ,\ \  
\langle 
B
, 
\Phi
\rangle
= 
a_0 b_1 c_1
\ \ ,\ \  
\langle 
C_0
, 
\Phi
\rangle
= 
a_0 b_0 c_1
\ \ ,\ \ 
C_1
= 
(a_0 b_0 - a_1 b_1) c_0
\end{equation}
for some linear $a_i=a_i(s_0,s_1)$, 
$b_j = b_j(t_0,t_1)$, 
$c_k = c_k(u_0,u_1)$. 
Hence, the syzygies follow. 
\end{proof}

\begin{mycorB} 
\label{corB: (1,1,2)}
With the notation of Lemma A$.$\ref{lemmaA: (1,1,2)}, the parametric line congruences admit the rational parameterizations $\P_\C^1\times\P_\C^1 \dashrightarrow Q$ given by  
\begin{eqnarray}
\label{eq: lc 1 (1,1,2)}
S \equiv &  
\tau \wedge \delta & = 
(b_0 \, B - b_1 \, C_0) 
\wedge 
(- b_0 c_0 \, A + b_1 c_0 \, C_0 + b_0c_1 \, C_1)
\ ,
\\[4pt]
\label{eq: lc 2 (1,1,2)}
T \equiv &  
\sigma \wedge \gamma & = 
(a_0 \, A - a_1 \, C_0) 
\wedge 
(a_0 c_0 \, B - a_1 c_0 \, C_0 + a_0 c_1 \, C_1) 
\ ,
\\[4pt]
\label{eq: lc 3 (1,1,2)}
U \equiv &  
\sigma \wedge \tau & = 
(a_0 \, A - a_1 \, C_0) 
\wedge 
(b_0 \, B - b_1 \, C_0) 
\ .
\end{eqnarray}
\end{mycorB}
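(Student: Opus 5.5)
The plan mirrors the proof of Proposition B.\ref{propB: (1,1,1)}: for each parametric line, find two syzygies from Lemma A.\ref{lemmaA: (1,1,2)} that do not depend on the variable running along that line. Evaluated at the fixed values of the other two variables, such syzygies are two planes containing the line. Their exterior product then gives its Plücker coordinates, via the identification of Section~\ref{subsec: plucker coordinates}.

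\textbf{The $U$-lines.} The syzygy $\sigma$ depends only on $(s_0:s_1)$ and $\tau$ only on $(t_0:t_1)$. So for fixed $(s,t)$, the $u$-line $\phi(s,t,\cdot)$ lies in the planes $\sigma(s)=0$ and $\tau(t)=0$. If these planes are distinct, $U\equiv\sigma\wedge\tau$. Distinctness follows because $A$, $B$, $C_0$ are linearly independent. Then $\sigma\wedge\tau$ vanishes only when $a_0b_0=a_1b_0=a_0b_1=a_1b_1=0$, which is a proper closed set. Hence the parametrization is a rational map defined on a dense open set of $\P_\C^1\times\P_\C^1$.

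\textbf{The $S$-lines.} By Lemma A.\ref{lemmaA: (1,1,2)}, $\tau$ depends only on $t$, and $\delta$ involves only $b_j$ and $c_k$. Here I will read $c_k$ as linear forms in $(u_0:u_1)$, as the proof of that lemma does; the statement's $c_k(s_0,s_1)$ appears to be a typo. So both planes are independent of $s$, and for fixed $(t,u)$ they contain the $s$-line. The displayed formula for $S$ uses $-\delta$ with the $A$ and $C_0$ coefficients arranged so that $\tau\wedge(\cdot)$ matches. Before relying on it, I will check against the explicit values $\langle A,\Phi\rangle$, etc. in the proof of Lemma A that the displayed second factor really is a syzygy, i.e. pairs to zero with $\Phi$. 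The sign and index conventions there need care.

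\textbf{The $T$-lines.} Symmetrically, $\sigma$ and $\gamma$ involve only $a_i$ and $c_k$, so they are independent of $t$ and give $T\equiv\sigma\wedge\gamma$. I would first verify linear independence of the two planes generically, by evaluating at one point. For example, $a=(1,0)$ gives $\sigma=A$ and $\gamma=c_0B+c_1C_1$, which are independent for generic $c$ since $A,B,C_1$ are independent. The analogous check works for $S$.

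\textbf{Main obstacle.} The key step is confirming that $\gamma$ and $\delta$ are genuine syzygies: that $\langle\gamma,\Phi\rangle=0$ and $\langle\delta,\Phi\rangle=0$ identically, using the relations from \cite[Corollary~4.9]{trilinear_siaga}. The stated value ``$C_1=(a_0b_0-a_1b_1)c_0$'' must mean $\langle C_1,\Phi\rangle$, and its sign conventions must be reconciled with the formulas. I would expand $\langle\cdot,\Phi\rangle$ term by term and adjust signs, or swap $a_0\leftrightarrow a_1$, so that it vanishes. The same computation then fixes the exact form of the second factors displayed in the corollary.

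Once each pair is known to consist of two independent planes through the corresponding parametric line, the corollary follows directly. The exterior product of the two planes gives the line's Plücker coordinates, and the images of these maps are by definition $S$, $T$, $U$.
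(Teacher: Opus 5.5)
Your route is the paper's: a syzygy that does not involve the running variable, evaluated at fixed values of the other two, is a plane containing the whole parametric line, and the wedge of two such planes is that line's dual Plücker vector. Your check that the two planes are generically independent is a step the paper omits.

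The real problem is the verification you defer. It does not confirm the displayed expansions, and your remark that the displayed $S$-factor ``matches'' $\tau\wedge\delta$ is false. Use the relations $\langle A,\Phi\rangle=a_1b_0c_1$, $\langle B,\Phi\rangle=a_0b_1c_1$, $\langle C_0,\Phi\rangle=a_0b_0c_1$, $\langle C_1,\Phi\rangle=(a_0b_0-a_1b_1)c_0$. Then:
\begin{itemize}
\item The lemma's $\gamma=a_1c_0B-a_0c_0C_0+a_0c_1C_1$ is a syzygy.
\item The lemma's $\delta$ pairs to $2b_0c_0c_1(a_1b_1-a_0b_0)$. It must be corrected to $\delta=b_1c_0A-b_0c_0C_0+b_0c_1C_1$, the mirror of $\gamma$ under $a\leftrightarrow b$, $A\leftrightarrow B$.
\item The displayed second factors $-b_0c_0A+b_1c_0C_0+b_0c_1C_1$ and $a_0c_0B-a_1c_0C_0+a_0c_1C_1$ pair with $\Phi$ to $b_0c_0c_1(a_0-a_1)(b_0+b_1)$ and $a_0c_0c_1(a_0-a_1)(b_0+b_1)$.
\end{itemize}
So for generic fixed parameters those displayed planes meet the parametric line in a single point instead of containing it. Wedging them with $\tau$ (resp.\ $\sigma$) therefore gives a different line. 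No choice of signs in those expansions repairs this: the term $b_0c_0A$ contributes a monomial $a_1b_0^2c_0c_1$ that nothing else cancels. What your argument establishes, and what the paper's one-line proof actually justifies, is $S\equiv\tau\wedge\delta$, $T\equiv\sigma\wedge\gamma$, $U\equiv\sigma\wedge\tau$ with $\gamma$ as in the lemma and $\delta$ sign-corrected. The expanded right-hand sides given for $S$ and $T$ are typos and cannot be verified.

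There are two smaller slips.
\begin{itemize}
\item At $a=(1:0)$ the lemma's $\gamma$ is $-c_0C_0+c_1C_1$, not $c_0B+c_1C_1$; independence from $\sigma=A$ still holds.
\item You have $\sigma\wedge\tau=a_0b_0\,A\wedge B-a_0b_1\,A\wedge C_0+a_1b_0\,B\wedge C_0$. This vanishes exactly at $a_0=b_0=0$, where both planes are $C_0=0$. So ``only when all four products vanish'' is wrong, though your conclusion that the parametrization is defined off a proper closed set stands.
\end{itemize}
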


\begin{proof}
The argument is the same as in the proof of Proposition~B.\ref{propB: (1,1,1)}. 
Namely, since the syzygies $\tau$ and $\delta$ do not depend on $s_0,s_1$, the $s$-line associated to $(t_0:t_1)\times (u_0:u_1) \in (\P_\C^1)^2$ lies in the two planes $\tau=0$ and $\delta=0$. 
Hence, its Plücker coordinates are given by $\tau\wedge\delta$. 
The remaining parameterizations follow by obvious modifications.
\end{proof}

\begin{mypropC} 
\label{propC: (1,1,2)}
Let $\phi$ be a general trilinear birational map of type $(1,1,2)$. 
Then, there are distinct lines $a,b\subset\P_\C^3$ and a plane conic $c\subset\P_\C^3$ satisfying:
\begin{enumerate}[itemsep=2pt]
\item The lines $a,b$ are coplanar. 
\item The intersections $P = a\cap c$ and $Q = b \cap c$ are points.
\end{enumerate}
such that: 
\begin{enumerate}[itemsep=2pt]
\item The focal curves of $S$ are $a$ and $c$.
\item The focal curves of $T$ are $b$ and $c$. 
\item $U$ has a single focal point $O = a\cap b$. 
\end{enumerate} 
\end{mypropC}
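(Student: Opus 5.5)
The plan is to proceed as in the proof of Proposition C.\ref{propC: (1,1,1)}: read off the focal elements from the explicit parameterizations of Corollary B.\ref{corB: (1,1,2)}, and then check incidence through the exterior-product criterion \eqref{eq: intersecting lines}. Since $A,B,C_0,C_1$ form a basis of $(\C^4)^\vee$, we may work in dual coordinates adapted to this basis.

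I define the candidate focal elements directly from this basis.
\begin{itemize}
\item The line $a \equiv A\wedge C_0$ and the line $b\equiv B\wedge C_0$. They are distinct, and both lie in the plane $C_0=0$, so they are coplanar. Their intersection is the point $O$ where $A=B=C_0=0$.
\item The conic $c$ is contained in the plane $C_1=0$.
\end{itemize}

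For $U$, every line $\sigma\wedge\tau$ lies in the two planes $a_0A-a_1C_0=0$ and $b_0B-b_1C_0=0$. Both planes contain $O$, so every line of $U$ passes through $O$. The point $\{O\}$ is a single point, hence minimal, and it is the focal variety of $U$. This gives the degenerate class $(C)$.

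For $S$, I check that $(A\wedge C_0)\wedge\tau\wedge\delta=0$.
\begin{itemize}
\item Expanding $\tau\wedge\delta$ gives a combination of $B\wedge A$, $B\wedge C_0$, $B\wedge C_1$, $C_0\wedge A$ and $C_0\wedge C_1$.
\item Wedging with $A\wedge C_0$ kills every term except the $B\wedge C_1$ term, whose coefficient is $b_0^2c_1$.
\item This term is not zero, so I expect the sign and coefficient bookkeeping from Lemma A.\ref{lemmaA: (1,1,2)} to need adjusting.
\end{itemize}
It may be that the correct focal line of $S$ is $B\wedge C_0$ rather than $A\wedge C_0$, or a line built from $C_1$. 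I would first determine which line kills all terms. A line $\ell$ meets every line of $S$ exactly when $\ell\wedge(\tau\wedge\delta)$ vanishes identically in $(t,u)$. This is a linear condition in the six Plücker coordinates of $\ell$, and I would solve it coefficient by coefficient. I expect a one-dimensional solution lying on $Q$, and I would name that line $a$. The analogous computation for $T$ gives $b$.

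For the conic, I would intersect each $s$-line with the plane $C_1=0$ using the contraction $\iota_{C_1}(\tau\wedge\delta)$. In the primal basis this gives the point $\delta(C_1)\,\tau-\tau(C_1)\,\delta$, which depends only on $(t_0:t_1)$ and $(u_0:u_1)$. The set of these points over all $(t,u)$ would be the whole plane, so this choice is wrong as stated. Instead I look for the points where the $s$-line meets the conic. This should be the curve traced as $t$ varies, after eliminating the $u$-dependence. Concretely, I would parametrize $c$ by $t\mapsto \tau\cap\delta|_{c_1=0}\cap\{\text{plane}\}$ and expect quadratic coordinates in $(b_0,b_1)$, which gives a plane conic. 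I would then verify the following.
\begin{itemize}
\item Every $s$-line meets $c$, by exhibiting for each $(t,u)$ a point of the line with quadratic coordinates in $b$.
\item $c$ meets $a$ in a single point $P$ and $b$ in a single point $Q$, by evaluating the conic parametrization at the values where it lands on these lines.
\end{itemize}
By symmetry, the same conic serves for $T$.

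Minimality then follows from Lemma \ref{lemma: minimality of focal}, since $S$ and $T$ are not degenerate. They are not degenerate because their lines do not all pass through a common point, which a generic choice of the $a_i,b_j,c_k$ makes visible. Uniqueness follows because a congruence of class $B$ determines its conic and its line.

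The main obstacle is locating the conic $c$ and the correct focal lines from the formulas. The coefficients in Lemma A.\ref{lemmaA: (1,1,2)} appear inconsistent with Corollary B.\ref{corB: (1,1,2)}, so I would first recompute the syzygies from the relations $\langle A,\Phi\rangle=a_1b_0c_1$ and so on. After that, I would find the focal curves by solving for a point of each $s$-line that depends only on $t$. For fixed $t$, the $s$-lines form a pencil in the plane $\tau=0$, and this point is the vertex of that pencil. The locus of these vertices as $t$ varies is the conic.
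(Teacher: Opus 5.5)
You handle $U$ correctly, and you are right that the formulas are inconsistent: the $s$-free syzygy should be $\delta=b_1c_0A-b_0c_0C_0+b_0c_1C_1$. However, the two steps that make up this proposition are left as expectations, and your diagnosis of why the first attempt failed is wrong.

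The nonvanishing of $(A\wedge C_0)\wedge\tau\wedge\delta$ is not a bookkeeping artifact; the term $b_0^2c_1\,B\wedge C_1$ survives any sign correction of $\delta$. The actual problem is that $A\wedge C_0$ is the focal line of $T$, not of $S$, so you have swapped the labels. Finding the correct line needs no linear system. Since $\tau=b_0B-b_1C_0\in\mathrm{span}(B,C_0)$, we get $(B\wedge C_0)\wedge\tau=0$, hence $(B\wedge C_0)\wedge\tau\wedge\delta=0$. Geometrically, every $s$-line lies in a plane $\tau(t)=0$ of the pencil with axis $B=C_0=0$. Symmetrically, $\sigma\in\mathrm{span}(A,C_0)$ makes $A\wedge C_0$ meet every $t$-line. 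This is the paper's choice $a\equiv B\wedge C_0$, $b\equiv A\wedge C_0$.

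More seriously, you discard the argument that actually works. The claim that the points $\ell\cap\{C_1=0\}$, $\ell\in S$, fill the plane is false. Also, $\delta(C_1)\,\tau-\tau(C_1)\,\delta$ is not a point: it has the shape of a contraction of the dual bivector $\tau\wedge\delta$ by a vector, which yields a plane, and $C_1$ is a covector anyway. Solve $\tau=\delta=C_1=0$ directly instead. On $C_1=0$ the last term of $\delta$ drops out, leaving $c_0(b_1A-b_0C_0)=0$. So for $c_0\neq 0$ the intersection point has coordinates $(A:B:C_0:C_1)=(b_0^2:b_1^2:b_0b_1:0)$, which is independent of $u$. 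This is exactly the paper's contraction $\iota_{x_3}(S)=-c_0(b_0^2e_0+b_1^2e_1+b_0b_1e_2)$, and it lies on $C_1=AB-C_0^2=0$, which is the conic $c$.

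Your closing idea, the vertex of the pencil of $s$-lines in the plane $\tau(t)=0$, is correct and gives the same point, because the planes $\delta(t,u)=0$ form a pencil with axis $b_1A-b_0C_0=C_1=0$. But you never compute it, so as written neither the focal line of $S$ nor the conic is established. Once they are, $P=a\cap c$ and $Q=b\cap c$ are the points $B=C_0=C_1=0$ and $A=C_0=C_1=0$, i.e.\ the parameter values $b_1=0$ and $b_0=0$.
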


\noindent See also Fig.~\ref{fig:949}.

\begin{figure}\centering\setlength{\unitlength}{1mm}\footnotesize
  \begin{picture}(0,0)
    \put(20,52){$a$}\put(34,52){$b$}\put(0,5){$c$}
    \put(12,2){$Q$}\put(47,2){$P$}\put(32,46){$O$}
    \put(34,20){$S$}\put(21,10){$T$}\put(27,26){$U$}
  \end{picture}
  \includegraphics[width=.4\textwidth]{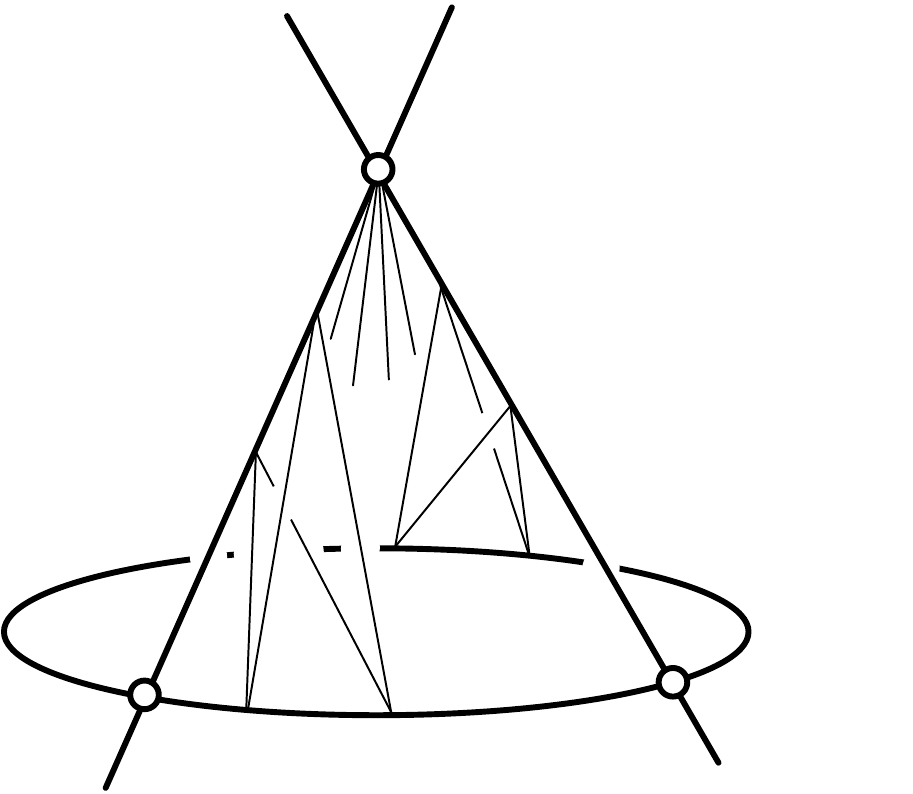}
  \caption{Focal curves (thick lines) and line congruences (thin lines) for type $(1,1,2)$, class 1.}
  \label{fig:949}
\end{figure}

\begin{proof}
We start from the parameterizations \eqref{eq: lc 1 (1,1,2)}, \eqref{eq: lc 2 (1,1,2)} and \eqref{eq: lc 3 (1,1,2)} of Proposition C$.$\ref{propC: (1,1,2)}. 
First, consider the lines $a,b\subset \P_\C^3$ with Plücker coordinates
\begin{equation}
\label{eq: focal curves (1,1,2)}
a \equiv B \wedge C_0 
\ , \ 
b \equiv A \wedge C_0
\ ,
\end{equation} 
which are nonzero since $A,B$ and $C_0$ are linearly independent. 
It is straightforward that $a,b$ lie in the plane $C_0 = 0$. 
Moreover, $a$ (resp$.$ $b$) intersects every line $\ell\in S$ (resp$.$ $\ell\in T$), since $a\wedge S = 0$ (resp$.$  $b\wedge T = 0$) because each term in the expansion repeats either $B$ (resp$.$ $A$) or $C_0$.
Similarly, both $a$ and $b$ intersect every $\ell\in U$ since $a\wedge \ell = b \wedge \ell = 0$. 
Therefore, the point $O = a\cap b$ lies in every line $\ell\in U$, and the line congruence $U$ is degenerate with focal point $O$. 

Next, we prove that every $\ell\in S$ intersects a plane conic $c\subset\P_\C^3$. 
More specifically, $c$ is the conic defined by the plane $C_1 = 0$ and the quadric $AB-C_0^2 = 0$. 
The fact that every $\ell\in T$ intersects the same conic $c$ will follow from the same argument. 
Moreover, since $C_0,C_1$ are linearly independent the curves $a,b,c$ will not be  coplanar. 
First, since incidence is preserved by automorphisms of $\P_\C^3$, for simplicity we apply the linear change of variables 
$$
A \mapsto x_0 
\ ,\ 
B \mapsto x_1 
\ ,\ 
C_0 \mapsto x_2 
\ ,\ 
C_1 \mapsto x_3 
\ ,
$$
which sends the parametrization \eqref{eq: lc 1 (1,1,2)} of $S$ to 
\begin{equation*} 
S
\equiv 
b_0b_1
(- c_0 \, (x_0\wedge x_1) 
+ c_1 \, (x_2 \wedge x_3)) 
-
b_0^2
(c_0 \, 
(x_1 \wedge x_2)
+ 
c_1 \, 
(x_1 \wedge x_3)
)
+ 
b_1^2 
u_0 
\, 
(x_0\wedge x_2)
\in 
\bigwedge^2 (\C^4)^\vee
\ .
\end{equation*}
In these coordinates, $c$ is defined by $x_3 = x_0x_1 - x_2^2 = 0$. 
On the other hand, the isomorphism $\P(\bigwedge^2 (\C^4)^\vee) \simeq \P(\bigwedge^2 \C^4)$ (see Section \ref{subsec: plucker coordinates}) identifies 
these Plücker coordinates with 
\begin{equation} 
\label{eq: primal coordinates (1,1,2)}
S
\equiv 
b_0b_1
(- c_0 \, (e_2\wedge e_3) 
+ c_1 \, (e_0 \wedge e_1)) 
-
b_0^2
(c_0 \, 
(e_0 \wedge e_3)
- 
c_1 \, 
(e_0 \wedge e_2)
)
- 
b_1^2 
u_0 
\, 
(e_1\wedge e_3)
\in 
\bigwedge^2 
\C^4 
\end{equation} 
where $e_i$ is the dual vector to $x_i$. 
Now, let $\ell\in S$ be the generic $s$-line. 
We can compute the intersection $\ell \cap \{x_3 = 0\}$ as the contraction of $S$ in \eqref{eq: primal coordinates (1,1,2)} by $x_3$, obtaining 
\begin{equation}
\iota_{x_3}(S) 
= 
-
c_0
(
b_0^2 \, e_0 + b_1^2 \, e_1 + b_0b_1 \, e_2
) 
\ .
\end{equation}
In particular, $\iota_{x_3}(S)$ satisfies  $x_0x_1 - x_2^2 = 0$. 
Therefore, $\ell\cap\{x_3=0\}\in c$ and the plane conic $c$ is a focal curve of $c$.  
Additionally, since $A,B,C_0,C_1$ are linearly independent, $P = a\cap b \equiv A\wedge B \wedge C_0$ does not belong to the plane $C_1 = 0$, hence neither to $c$. 
\end{proof}

The following is the classification of the parametric line congruences of real trilinear birational maps of type $(1,1,2)$. 

\begin{mytheoremD}  
\label{theoD: (1,1,2}
Let $\phi$ be a general real trilinear birational map of type $(1,1,2)$, and let $a,b,c\subset\P_\C^3$ be  the lines and plane conic in \textup{Proposition C$.$\ref{propC: (1,1,2)}}. 
Then, the three curves are real. 

Recall that $O = a\cap b$. 
Then, every (possibly degenerate) real parametric line congruence falls (up to permutation of the factors of $\P_\C^1\times\P_\C^1\times\P_\C^1$) into one of the following classes:
\begin{enumerate}
\item (General) The conic $c$ is smooth and $O\not\in C$:  
\begin{enumerate}[itemsep=2pt]
\item $S$ is quadratic $(B)$ with focal curves $b$ and $c$. 
\item $T$ is quadratic $(B)$ with focal curves $a$ and $c$.  
\end{enumerate} 
\item The conic $c$ is smooth and $O\in C$:  
\begin{enumerate}[itemsep=2pt]
\item $S$ is quadratic $(B)$ with focal curves $b$ and $c$. 
\item $T$ is quadratic $(B)$ with focal curves $a$ and $c$.
\end{enumerate}
\item The conic $c=x\cup y$ is reducible and the four lines are distinct:  
\begin{enumerate}[itemsep=2pt]
\item $S$ is hyperbolic linear $(A.1)$ with focal lines $b$ and $x$. 
\item $T$ is hyperbolic linear $(A.1)$ with focal lines $a$ and $y$. 
\end{enumerate}
\item The conic $c=x\cup y$ is reducible and  $y\leadsto a$:  
\begin{enumerate}[itemsep=2pt]
\item $S$ is hyperbolic linear $(A.1)$ with focal lines $b$ and $x$. 
\item $T$ is parabolic linear $(A.3)$ with focal line $a$. 
\end{enumerate}
\item The conic $c=x\cup y$ is reducible, $y\leadsto a$ and $x\leadsto b$: 
\begin{enumerate}[itemsep=2pt]
\item $S$ is parabolic linear $(A.3)$ with focal line $2b$.
\item $T$ is parabolic linear $(A.3)$ with focal line $2a$.
\end{enumerate}
\end{enumerate}
\end{mytheoremD}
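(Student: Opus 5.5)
I will follow the same scheme as in the proof of Theorem~D.\ref{theoD: (1,1,1)}, in two stages: reality of $a,b,c$, and then the list of degenerations.

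\emph{Reality of the focal curves.} By Proposition~C.\ref{propC: (1,1,2)}, for general $\phi$ the congruences $S$ and $T$ are quadratic $(B)$, each with one focal line and the common focal conic $c$, while $U$ is degenerate $(C)$ with focal point $O=a\cap b$. Since $\phi$ is real, conjugating the parametrization shows that $\overline{S}$ is again a parametric congruence of $\phi$ of the same factor. Hence $S$, $T$ and $U$ are real. Corollary~\ref{corollary: focal real} then gives $\overline{c}=c$, and also that the focal line of $S$ and the focal line of $T$ are real. The point $O$ is real because $U$ is real of class $C$. I will also check reality directly: in Lemma~A.\ref{lemmaA: (1,1,2)} the forms $A,B,C_0,C_1$ can be chosen real up to scaling, since they are determined by the factorization pattern of $\langle\cdot,\Phi\rangle$ for real $\Phi$. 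Then $a\equiv B\wedge C_0$, $b\equiv A\wedge C_0$ and $c=\{C_1=AB-C_0^2=0\}$ are visibly real. This direct check is a cross-check rather than the main argument.

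\emph{Classification.} I will list the ways the general configuration (smooth conic $c$, coplanar lines $a,b$ meeting $c$ at $P$ and $Q$, $O\notin c$) can specialize while keeping the map birational of type $(1,1,2)$. The plan is to use the explicit model $x_3=0$, $x_0x_1=x_2^2$ from the proof of Proposition~C.\ref{propC: (1,1,2)} and vary the free data.

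\begin{enumerate}
\item The incidence $O\in c$ can occur. This gives class 2; the focal description is unchanged because the computation with $\iota_{x_3}$ still applies.
\item The conic can become reducible, $c=x\cup y$. Each $s$-line (resp.\ $t$-line) then meets $c$ in one point, and irreducibility of $S$ forces all of its lines to meet one fixed component. This yields hyperbolic linear congruences with focal lines $\{b,x\}$ and $\{a,y\}$, so $S$ and $T$ do not share a component. This gives class 3, and reality of $x$ and $y$ follows from Corollary~\ref{corollary: focal real} applied to linear congruences via Lemma~\ref{lemma: skew lines}.
\item Within a reducible conic, a component may become infinitely near the complementary focal line. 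The case $y\leadsto a$ gives class 4, and the cases $y\leadsto a$, $x\leadsto b$ give class 5. Here the hyperbolic pair collapses to a parabolic $(A.3)$ congruence.
\end{enumerate}

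\emph{Exclusions.} I must rule out every other degeneration:
\begin{itemize}
\item the conic degenerating to a double line;
\item $a$ and $b$ coinciding;
\item focal lines meeting so that $S$ or $T$ becomes degenerate $(C)$;
\item elliptic linear $(A.2)$ cases, with $x,y$ conjugate.
\end{itemize}
For $(A.2)$, the two components of $c$ meet at a real point and are coplanar, so they are not skew. Lemma~\ref{lemma: types of complex conjugate lines} shows they cannot serve as the skew pair of an elliptic congruence. Moreover each congruence pairs a conic component with the real line $a$ or $b$, so no conjugate pair of focal lines arises. For the remaining degenerations I will argue as in Theorem~D.\ref{theoD: (1,1,1)}: if the two focal data coincide, or if $S$ or $T$ becomes a star of lines, then $S=T$ or a congruence becomes planar or degenerate. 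This contradicts birationality (Remark~\ref{remark: no planar LCs}) or forces the type to drop to $(1,1,1)$. Existence of each listed class will be taken from the classification in \cite{trilinear} of type $(1,1,2)$ maps.

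The main obstacle is exhaustiveness: showing that no degeneration other than classes 2 through 5 keeps the type equal to $(1,1,2)$. I would handle it through the explicit syzygies of Lemma~A.\ref{lemmaA: (1,1,2)}, letting the linear forms $a_i,b_j,c_k$ acquire common roots and checking which specializations preserve linear independence of $A,B,C_0,C_1$ and the degree of the inverse.
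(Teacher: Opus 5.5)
Your reality argument is the paper's (Corollary~\ref{corollary: focal real} applied to the real congruences $S$ and $T$), and your list of degenerations is the same heuristic list the paper gives. The gap is in how you propose to reach and bound the degenerate classes, which is exactly the step you call the main obstacle.

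In the normal form of Lemma~A.\ref{lemmaA: (1,1,2)}, the forms $A,B,C_0,C_1$ are linearly independent. So in the coordinates $x_0=A$, $x_1=B$, $x_2=C_0$, $x_3=C_1$ the conic $c$ is always the smooth conic $x_3=x_0x_1-x_2^2=0$. Likewise $O=\{A=B=C_0=0\}=(0:0:0:1)$ never lies on the plane $x_3=0$; this is the last sentence of the proof of Proposition~C.\ref{propC: (1,1,2)}. Hence your claim that class~2 is covered because ``the computation with $\iota_{x_3}$ still applies'' is false, and no variation of data inside this model produces any of classes 2--5.

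Letting $a_i,b_j,c_k$ acquire common roots does not help either. If, say, $a_1=\kappa a_0$, then all four pullbacks $\langle A,\Phi\rangle,\langle B,\Phi\rangle,\langle C_0,\Phi\rangle,\langle C_1,\Phi\rangle$ are divisible by $a_0$. Since $A,B,C_0,C_1$ is a basis, $f_0,\dots,f_3$ acquire a common factor, so $\phi$ is no longer a trilinear map in the sense of Definition~\ref{def: TBM}; the same happens for $b_0,b_1$ or $c_0,c_1$. Nor can you let $A,B,C_0,C_1$ become dependent while keeping the pullback formulas: $\langle C_1,\Phi\rangle=(a_0b_0-a_1b_1)c_0$ is not a multiple of $c_1$, so it is not in the span of the other three. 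Specializing Lemma~A therefore only ever returns class~1 or leaves the class of maps altogether.

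The degenerate classes are separate normal forms with differently shaped pullbacks. Exhaustiveness has to come from the complete complex classification of type $(1,1,2)$ maps in \cite[Theorem~5.4]{trilinear}. For each normal form there, you would recompute the syzygies and the contraction argument of Proposition~C.\ref{propC: (1,1,2)} to read off the focal data, and only then impose reality. The paper itself justifies the list of degenerations only heuristically and cites that theorem for existence. So a rigorous version of either argument must go through that classification, not through specializations of the general normal form.

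Two smaller points.
\begin{itemize}
\item In your item~2, ``so $S$ and $T$ do not share a component'' does not follow from what precedes it. One way to supply it: for general $u$ the $u$-surface is a smooth quadric containing $c$, whose two rulings are the $s$- and $t$-lines. A component of $c$ met by all $s$-lines must then lie in the $t$-ruling, so it cannot also be met by all $t$-lines.
\item Your exclusion of $(A.2)$ uses reality of $a,b$ in the degenerate classes, but your first part proves this only for general $\phi$. This is easily repaired: since $\deg\sigma=\deg\tau=1$, these lines are the axes of the real pencils of $s$- and $t$-planes.
\end{itemize}
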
 

\begin{proof}
If $\phi$ is general, Proposition C$.$\ref{propC: (1,1,2)} implies that $S$ and $T$ are both quadratic $(B)$ with focal curves $a,b,c$. 
Hence, the three curves must be real by Corollary \ref{corollary: focal real}. 
  
Next, we address the classification. 
Proposition C$.$\ref{propC: (1,1,2)} already provides the general class.  
On the other hand, since the focal curves are one conic and two coplanar lines, the following degenerations may occur:
\begin{enumerate}
\item The plane supporting $a\cup b$ becomes tangent to $c$. 
In this case, $S$ and $T$ remain quadratic line congruences $(B)$. 
\item The conic $c = x\cup y$ splits as the union of two lines $x,y$. 
In this case, $S$ and $T$ no longer have a smooth plane conic as a focal variety. 
Hence, they degenerate to linear congruences $(A)$. 
\item Two skew focal lines approach each other along a smooth quadric until they become infinitely near (flat limit), yielding a double line contained in a smooth quadric. 
In this case, we obtain a parabolic linear congruence $(A.3)$.
\end{enumerate} 
The possible degenerations are those listed in the statement. 
The fact that all these possibilities occur as the parametric line congruences of a trilinear birational map of type $(1,1,2)$ is ensured by \cite[Theorem $5.4$]{trilinear}.
\end{proof}

\subsection{Real Line Congruences of Trilinear Birational Maps of Type $(1,2,2)$} 

In this section, we focus on real trilinear birational maps of type $(1,2,2)$. 
Now, the inverse is quadratic in the last two factors of $\mathbb{P}^1_{\mathbb{C}} \times \mathbb{P}^1_{\mathbb{C}} \times \mathbb{P}^1_{\mathbb{C}}$, and linear in the first one. 

As usual, we begin providing explicit formulas for the syzygies of a general birational map. 

\begin{mylemmaA}
\label{lemmaA: (1,2,2)}
Let $\phi$ be a general trilinear birational map of type $(1,2,2)$. 
Then, the defining polynomials $\Phi = (f_0,f_1,f_2,f_3)$ admit syzygies of the form 
\begin{gather}
\alpha = 
a_0 \, A_1 - a_1 \, A_0
\ \ ,\ \ 
\beta = a_0 c_0 \, C_1 - a_1 c_1 \, C_0 
\ \ ,\ \ 
\gamma = a_0 b_0 \, B_1 - a_1 b_1 \, B_0 \ \ ,\\[4pt]
\delta_1 = b_2 c_0 \, B_0 - b_0 c_2 \, C_0 
\ \ ,\ \ 
\delta_2 = b_2 c_1 \, B_1 - b_1 c_2 \, C_1 
\end{gather} 
for some linear $a_i = a_i(s_0,s_1)$, $b_j = b_j(t_0,t_1)$, $c_k = c_k(s_0,s_1) \in R$ 
and $A_i,B_j,C_k\in \C^4$ where 
$$
E_0 \sqcup E_1
= 
\{
A_0
, 
B_0 
, 
C_0
\}
\sqcup 
\{
A_1
, 
B_1 
, 
C_1
\}
$$
verifies\footnote{These three conditions mean that the set $E_0 \sqcup E_1$ with linearly independent sets of vectors is the direct sum $U_{2,3} \oplus U_{2,3}$, where $U_{2,3}$ is the uniform matroid with $3$ elements and independent sets of size at most $2$.}: 
\begin{enumerate}
\item the set $E_i$ is linearly dependent;
\item any two vectors in $E_i$ are linearly independent;
\item two distinct $u_0,v_0\in E_0$ and two distinct $u_1,v_1\in E_1$ form a basis of $\C^4$.
\end{enumerate}
\end{mylemmaA}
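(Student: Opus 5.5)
Following the proof of Lemma A.\ref{lemmaA: (1,1,2)}, I would start from the normal form of general trilinear birational maps of type $(1,2,2)$ in \cite{trilinear_siaga}; the analogue of Corollary~4.9 there for this type is what I would invoke. Concretely, I expect it to give linear forms $a_i=a_i(s_0,s_1)$, $b_j=b_j(t_0,t_1)$, $c_k=c_k(u_0,u_1)$ ($i=0,1$, $j,k=0,1,2$) and six covectors $A_0,A_1,B_0,B_1,C_0,C_1\in(\C^4)^\vee$ whose pairings with $\Phi$ are monomials in these forms. The pairings must be arranged so that each of the five combinations $\alpha,\beta,\gamma,\delta_1,\delta_2$ cancels identically. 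For instance, I would expect $\langle A_0,\Phi\rangle = a_0\,m$ and $\langle A_1,\Phi\rangle = a_1\,m$ for a common bilinear factor $m=m(t,u)$, so that $\alpha$ vanishes. Likewise, $\langle C_0,\Phi\rangle=a_0c_0\,n_0$ and $\langle C_1,\Phi\rangle=a_1c_1\,n_0$ would kill $\beta$, with the analogous pattern in the $b$'s killing $\gamma$. Finally, $\langle B_0,\Phi\rangle$ and $\langle C_0,\Phi\rangle$ should share the factor $b_0c_0$ times something divisible compatibly with $b_2,c_2$, so that $\delta_1$ cancels, and similarly for $\delta_2$.

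Since $\Phi$ has only four coordinates while there are six covectors, the linear relations among them must match the relations among these monomial expressions. First I would check that each $E_i$ spans only a plane. For example, the three pairings with $A_0,B_0,C_0$ should be proportional combinations satisfying a single linear relation, and this relation forces $E_0$ to be dependent (condition 1). Next, condition 2 follows because two distinct vectors of $E_i$ have non-proportional pairings with $\Phi$, since the monomials involve distinct linear forms. Finally, condition 3 amounts to showing that the span of $E_0$ and the span of $E_1$ are complementary $2$-planes in $(\C^4)^\vee$. This holds because the four pairings $\langle u_0,\Phi\rangle,\langle v_0,\Phi\rangle,\langle u_1,\Phi\rangle,\langle v_1,\Phi\rangle$ are linearly independent trilinear forms for a general map; otherwise the image of $\phi$ would lie in a plane, contradicting dominance (compare Remark~\ref{remark: no planar LCs}).

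With the normal form in hand, verifying the syzygies is routine. Each of $\alpha,\beta,\gamma,\delta_1,\delta_2$ is substituted into $\langle\cdot,\Phi\rangle$ and seen to cancel term by term, as in the $(1,1,2)$ case.

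The main obstacle is extracting the correct normal form from \cite{trilinear_siaga}, and in particular matching its notation to the five syzygies. I would first try to write $\Phi$ directly in the coordinates dual to a basis $u_0,v_0,u_1,v_1$ and read off the third vector of each $E_i$ as a linear combination. Genericity is used only to ensure that the linear forms $a_i,b_j,c_k$ are pairwise non-proportional, which gives the uniform-matroid conditions.
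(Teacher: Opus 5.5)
Your main line is the paper's. It quotes \cite[Corollary~5.9]{trilinear_siaga} for the normal form $\langle A_i,\Phi\rangle=a_ih$, $\langle B_i,\Phi\rangle=a_ib_ic_2$, $\langle C_i,\Phi\rangle=a_ib_2c_i$, with $h$ bilinear in $(t,u)$, and then the five syzygies cancel term by term, exactly as you intend. Your guesses for $\alpha$ and $\beta$ match this with $m=h$ and $n_0=b_2$. There is one slip: $\langle B_0,\Phi\rangle=a_0b_0c_2$ and $\langle C_0,\Phi\rangle=a_0b_2c_0$ share the factor $a_0$, not $b_0c_0$; $\delta_1$ cancels because both cross-products equal $a_0b_0b_2c_0c_2$. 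The paper also reads the three matroid conditions off the same corollary rather than deriving them.

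Your own derivation of those conditions has a genuine flaw. For condition~3, a dependence among $\langle u_0,\Phi\rangle,\dots,\langle v_1,\Phi\rangle$ gives $w=\sum\lambda_iw_i$ with $\langle w,\Phi\rangle=0$. Dominance only forces $w=0$, i.e.\ a dependence among $u_0,v_0,u_1,v_1$, which is exactly what you are trying to rule out; so no contradiction arises and the argument is circular. For condition~1, nothing you say produces the relation. With an arbitrary cofactor $m$, the forms $a_0m$, $a_0b_0c_2$, $a_0b_2c_0$ are independent, and non-proportionality of the linear forms (your closing sentence) cannot create a dependence.

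The missing idea is the splitting by $a_0,a_1$. Every trilinear form decomposes uniquely as $a_0g_0+a_1g_1$ with $g_i$ bilinear in $(t,u)$. All pairings with $E_0$ lie in the first summand and all pairings with $E_1$ in the second. By injectivity of $v\mapsto\langle v,\Phi\rangle$ (this is where dominance genuinely enters), every linear relation among the six covectors is therefore a sum of a relation inside $E_0$ and a relation inside $E_1$. Condition~2 does follow from genericity, via pairwise non-proportional cofactors $h,b_0c_2,b_2c_0$ (and likewise for $E_1$). Hence each $E_i$ supports at most one relation, while six vectors in $(\C^4)^\vee$ support at least two. So each $E_i$ supports exactly one relation, which is condition~1, and any choice of two vectors from $E_0$ and two from $E_1$ is independent, which is condition~3.
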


\begin{proof}
It follows from \cite[Corollary $5.9$]{trilinear_siaga} that we can find $A_i,B_j,C_k\in (\C^4)^\vee$ satisfying the linear independence conditions in the statement together with 
\begin{equation}
\langle 
A_i 
, 
\Phi
\rangle
= 
a_i h 
\ \ ,\ \  
\langle 
B_i 
, 
\Phi
\rangle
= 
a_i b_i c_2 
\ \ ,\ \  
\langle 
C_i 
, 
\Phi
\rangle
= 
a_i b_2 c_i
\end{equation}
for some linear $a_i=a_i(s_0,s_1)$, 
$b_j = b_j(t_0,t_1)$, 
$c_k = c_k(u_0,u_1)$ and bilinear $h = h(t_0,t_1;u_0,u_1)$. 
Hence, the syzygies in the statement follow. 
\end{proof}

\begin{mycorB}
\label{corB: (1,2,2)}
With the notation of Lemma A$.$\ref{lemmaA: (1,2,2)},  the parametric line congruences admit the rational parameterizations $\P_\C^1\times\P_\C^1 \dashrightarrow Q$ given by  
\begin{eqnarray}
\label{eq: lc 1 (1,2,2)}
S \equiv &  
\delta_1 \wedge \delta_2 & = 
(b_2 c_0 \, B_0 - b_0 c_2 \, C_0 )
\wedge 
(b_2 c_1 \, B_1 - b_1 c_2 \, C_1 )
\ ,
\\[4pt]
\label{eq: lc 2 (1,2,2)}
T \equiv &  
\alpha \wedge \beta & = 
(a_0 \, A_1 - a_1 \, A_0) 
\wedge 
(a_0 c_0 \, C_1 - a_1 c_1 \, C_0 ) 
\ ,
\\[4pt]
\label{eq: lc 3 (1,2,2)}
U \equiv &  
\alpha \wedge \gamma & = 
(a_0 \, A_1 - a_1 \, A_0) 
\wedge 
(a_0 b_0 \, B_1 - a_1 b_1 \, B_0)
\ .
\end{eqnarray}
\end{mycorB}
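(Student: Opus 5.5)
The argument follows the proofs of Proposition B.\ref{propB: (1,1,1)} and Corollary B.\ref{corB: (1,1,2)}. A parametric line is the intersection of two planes, each given by a syzygy that does not depend on the variable running along that line. Its Plücker coordinates are then, as in Section \ref{subsec: plucker coordinates}, the exterior product of the two plane coordinate vectors. So the plan is, for each of $S,T,U$, to pick from the list in Lemma A.\ref{lemmaA: (1,2,2)} two syzygies that are independent of the corresponding variable, and to check that their wedge is not identically zero.

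\textbf{Step 1 (which variables each syzygy involves).} From the formulas in Lemma A.\ref{lemmaA: (1,2,2)}, reading $c_k$ as forms in $(u_0,u_1)$ as in the proof of that lemma:
\begin{itemize}
\item $\delta_1$ and $\delta_2$ involve only $b_j(t)$ and $c_k(u)$, so they are free of $s_0,s_1$;
\item $\alpha$ and $\beta$ involve only $a_i(s)$ and $c_k(u)$, so they are free of $t_0,t_1$;
\item $\alpha$ and $\gamma$ involve only $a_i(s)$ and $b_j(t)$, so they are free of $u_0,u_1$.
\end{itemize}
Since each is a syzygy of $\Phi$, the point $\phi(s,t,u)$ lies in the plane defined by each of them at the corresponding parameter values. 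Hence the $s$-line through $\phi(\cdot,t,u)$ lies in the planes $\delta_1(t,u)=0$ and $\delta_2(t,u)=0$, and similarly for the $t$- and $u$-lines.

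\textbf{Step 2 (the two planes are distinct, so their wedge is the line).} This is the step that needs care: we must show $\delta_1\wedge\delta_2$, $\alpha\wedge\beta$ and $\alpha\wedge\gamma$ are not identically zero. We expand each product using the matroid conditions on $E_0\sqcup E_1$.
\begin{itemize}
\item In $\delta_1\wedge\delta_2$, the coefficient of $B_0\wedge B_1$ is $b_2^2c_0c_1$. Condition 3 gives that $B_0$ and $B_1$ are independent, so $B_0\wedge B_1\neq0$.
\item In $\alpha\wedge\beta$, the term $a_0^2c_0\,A_1\wedge C_1$ has $A_1\wedge C_1\neq0$ by condition 2. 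However, $A_0\wedge C_0$, $A_1\wedge C_0$, $A_0\wedge C_1$ and $A_1\wedge C_1$ might be dependent, since each $E_i$ spans only a plane. Instead, using condition 3, $\{A_0,C_0,A_1,C_1\}$ is a basis, so the four wedges $A_i\wedge C_k$ are linearly independent in $\bigwedge^2$. Thus $\alpha\wedge\beta=0$ would force $a_0^2c_0=0$ identically, which is false.
\item The same argument applies to $\alpha\wedge\gamma$, using the basis $\{A_0,B_0,A_1,B_1\}$.
\end{itemize}
Because $a_i$, $b_j$, $c_k$ are nonzero linear forms by genericity, each product is a nonzero element for generic parameters. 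It is therefore decomposable and represents exactly the intersection line of the two planes, which is the parametric line.

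\textbf{Step 3 (conclusion).} Through the identification $\P(\bigwedge^2(\C^4)^\vee)\simeq\P(\bigwedge^2\C^4)$, these wedges are the Plücker coordinates of the $s$-, $t$- and $u$-lines. The maps $(t,u)\mapsto\delta_1\wedge\delta_2$, $(s,u)\mapsto\alpha\wedge\beta$ and $(s,t)\mapsto\alpha\wedge\gamma$ are therefore rational parameterizations $\P_\C^1\times\P_\C^1\dashrightarrow Q$ of $S$, $T$ and $U$. Dominance onto each congruence is automatic, since every line of the congruence arises this way for generic parameters.
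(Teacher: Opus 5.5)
Your proposal is correct and follows the paper's route: the paper gives no separate proof here, but, as with Corollary B.\ref{corB: (1,1,2)}, it relies on the argument of Proposition B.\ref{propB: (1,1,1)}, namely that two syzygies free of the running variable cut out two planes containing the parametric line, so their wedge gives its Plücker coordinates; your Step~2 adds the non-vanishing check that the paper leaves implicit. One small repair: in your first bullet, $B_0\wedge B_1\neq 0$ alone does not give $\delta_1\wedge\delta_2\neq 0$, so argue as in your other two bullets: $\{B_0,C_0,B_1,C_1\}$ is a basis by condition~3, hence $B_0\wedge B_1$, $B_0\wedge C_1$, $C_0\wedge B_1$, $C_0\wedge C_1$ are linearly independent and the coefficient $b_2^2c_0c_1$ is not identically zero.
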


\begin{mypropC} 
\label{propC: (1,2,2)}
Let $\phi$ be a general trilinear birational map of type $(1,2,2)$. 
Then, we can find unique lines $a,b,c,x,y\subset\P_\C^3$ satisfying:
\begin{enumerate}[itemsep=2pt]
\item[1.] $x,y$ are skew. 
\item[2.] $a,b,c$ are pairwise skew. 
\item[3.] All $a,b,c$ intersect $x$ (resp$.$ $y$).  
\end{enumerate}
such that:
\begin{enumerate}[itemsep = 2pt]
\item[a)] The focal lines of $S$ are $x$ and $y$. 
\item[b)] The focal lines of $T$ are $a$ and $c$. 
\item[c)] The focal lines of $U$ are $a$ and $b$. 
\end{enumerate}
\end{mypropC}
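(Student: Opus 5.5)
The plan follows the proof of Proposition~C.\ref{propC: (1,1,1)}: read the focal lines off the pencils of planes in the parameterizations of Corollary~B.\ref{corB: (1,2,2)}. Each parameterization there is an exterior product of two planes, each moving in a pencil. A line lying in a plane of the pencil spanned by $P,P'$ meets the axis $P\wedge P'$ of that pencil. So I define
\begin{equation*}
x \equiv B_0\wedge C_0, \quad y \equiv B_1\wedge C_1, \quad a\equiv A_0\wedge A_1, \quad b \equiv B_0\wedge B_1, \quad c\equiv C_0\wedge C_1 .
\end{equation*}
Expanding by multilinearity, every term of $x\wedge S$ repeats $B_0$ or $C_0$, because $\delta_1\in\langle B_0,C_0\rangle$. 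Hence $x\wedge S=0$, and likewise $y\wedge S=0$. In the same way $a\wedge T=c\wedge T=0$, since $\alpha\in\langle A_0,A_1\rangle$ and $\beta\in\langle C_0,C_1\rangle$. Also $a\wedge U=b\wedge U=0$, since $\gamma\in\langle B_0,B_1\rangle$. By \eqref{eq: intersecting lines}, every line of each congruence therefore meets the two claimed focal lines.

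Next I verify conditions 1--3 using only the matroid conditions of Lemma~A.\ref{lemmaA: (1,2,2)}. By condition (3) applied to pairs from $E_0$ and from $E_1$, the four forms $B_0,C_0,B_1,C_1$ form a basis, so $x\wedge y\neq 0$ and $x,y$ are skew. The same condition gives
\begin{equation*}
a\wedge b = A_0\wedge A_1\wedge B_0\wedge B_1\neq 0, \qquad a\wedge c\neq 0, \qquad b\wedge c\neq 0,
\end{equation*}
since each of these 4-fold products takes two vectors from $E_0$ and two from $E_1$. So $a,b,c$ are pairwise skew.

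For condition 3, I use that $E_0$ is dependent with any two elements independent. This gives $A_0\in\langle B_0,C_0\rangle$, hence $a\wedge x = A_0\wedge A_1\wedge B_0\wedge C_0=0$. Symmetrically, $A_1\in\langle B_1,C_1\rangle$ gives $a\wedge y=0$. The products $b\wedge x$, $b\wedge y$, $c\wedge x$, $c\wedge y$ vanish trivially, because each repeats a $B_i$ or a $C_i$.

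To conclude that these are exactly the focal lines, I show each congruence is genuinely two-dimensional, hence the full linear congruence of lines meeting the two skew lines. For $S$, I take $(t,u)$ as coordinates. The plane $\delta_1$ is determined by the ratio $(b_2c_0:b_0c_2)$ and $\delta_2$ by $(b_2c_1:b_1c_2)$. For general (independent) linear forms $b_j$, $c_k$, the map $(t,u)\mapsto$ (pair of ratios) is dominant onto $\P^1\times\P^1$. Hence $S$ is all lines meeting $x$ and $y$; the cases $T$ and $U$ are analogous via $(a_0:a_1)$ and the ratios involving $c$ or $b$.

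Minimality and uniqueness then hold because the lines meeting two skew lines $\ell,\ell'$ have no other common transversal curve. Indeed, a third line met by all of them would have to be $\ell$ or $\ell'$, and no single point lies on all of them, so Lemma~\ref{lemma: minimality of focal} rules out degeneracy.

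The main obstacle is this dimension and genericity step: checking that the ratio maps are dominant under the genericity of $\phi$. I would handle it by noting that the Jacobian of $(t,u)\mapsto\bigl(b_2c_0/(b_0c_2),\,b_2c_1/(b_1c_2)\bigr)$ is generically nonzero when $b_0,b_1,b_2$ (respectively $c_0,c_1,c_2$) are pairwise non-proportional. This non-proportionality is implied by birationality, since otherwise the map $\phi$ would not be dominant, in the spirit of Remark~\ref{remark: no planar LCs}.
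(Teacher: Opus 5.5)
Your proposal is correct and follows the paper's proof. You define the same five lines as axes of the pencils containing $\alpha,\beta,\gamma,\delta_1,\delta_2$, use the same multilinearity argument for $x\wedge S=y\wedge S=a\wedge T=c\wedge T=a\wedge U=b\wedge U=0$, and use the conditions of Lemma~A.\ref{lemmaA: (1,2,2)} for properties 1--3. Your extra steps, that each congruence is two-dimensional and hence the full linear congruence, and that the focal lines are unique, are sound; the paper leaves them implicit, and the dimension step could be shortened by noting that the $s$-lines sweep out the image of $\phi$, so dominance alone rules out a one-parameter family.
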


\noindent See also Fig.~\ref{fig:1241}.

\begin{figure}\centering\setlength{\unitlength}{1mm}\footnotesize
  \begin{picture}(0,0)
    \put(45,35){$a$}\put(23,35){$b$}\put(72,38){$c$}
    \put(0,38){$x$}\put(0,3){$y$}
    \put(5,20){$S$}\put(60,22){$T$}\put(27,24){$U$}
  \end{picture}
  \includegraphics[width=.5\textwidth]{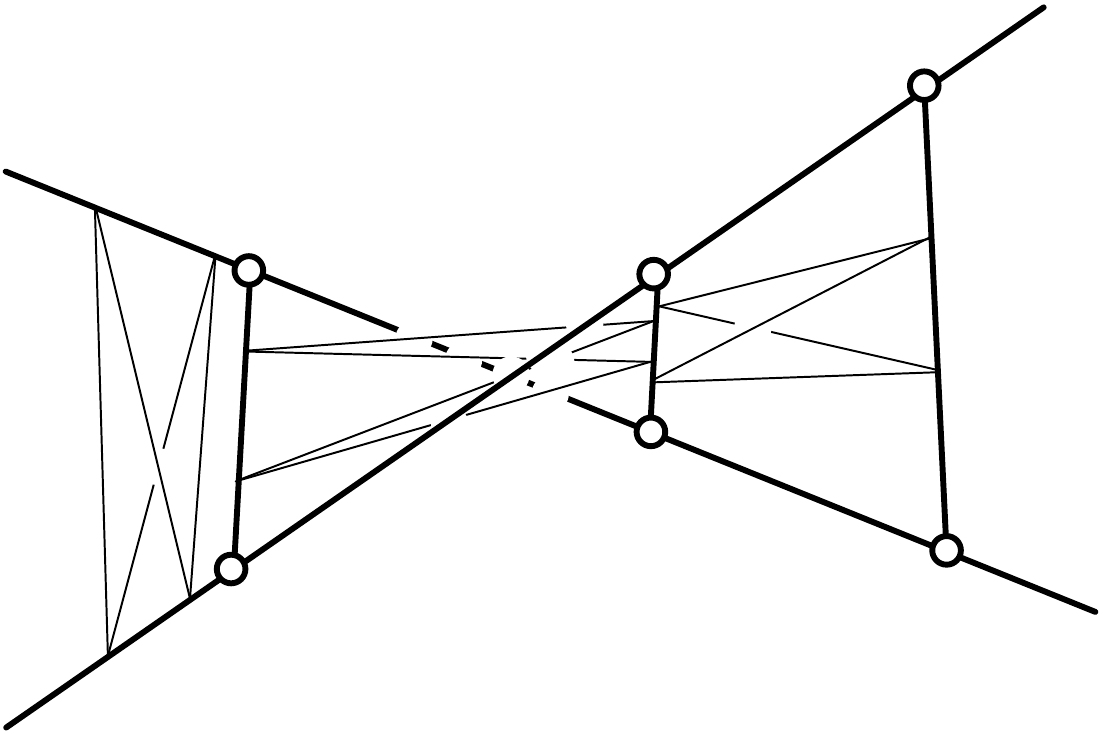}
  \caption{Focal lines (thick lines) and line congruences (thin lines) for  type $(1,2,2)$, class a1.}
  \label{fig:1241}
\end{figure}

\begin{proof}
We start from the parameterizations of Corollary B$.$\ref{corB: (1,2,2)}. 
First, define the lines $a,b,c,x,y\subset \P_\C^3$ with Plücker coordinates
\begin{equation}
\label{eq: focal lines (1,2,2)}
a \equiv A_0 \wedge A_1 
\ , \ 
b \equiv B_0 \wedge B_1
\ , \ 
c \equiv C_0 \wedge C_1
\ ,\ 
x \equiv B_0 \wedge C_0 
\ ,\ 
y \equiv B_1 \wedge C_1 
\ . 
\end{equation} 
The linear independence conditions in the statement of Lemma A$.$\ref{lemmaA: (1,2,2)} ensure the following:
\begin{enumerate}
\item Since $B_0,C_0\in E_0$ and $B_1,C_1\in E_1$ form a basis of $\C^4$, $x\wedge y = (B_0\wedge C_0) \wedge (B_1\wedge C_1) \not= 0$. 
Hence, $x$ and $y$ are skew. 
\item Since $A_0,B_0\in E_0$ and $A_1,B_1\in E_1$ form a basis of $\C^4$, $a\wedge b = (A_0\wedge A_1) \wedge (B_0\wedge B_1) \not= 0$. 
Hence, $a$ and $b$ are skew. 
The analogous arguments prove that $a,b,c$ are pairwise skew.
\item Since $E_0$ (resp$.$ $E_1$) is linearly dependent $(A_0\wedge A_1) \wedge (B_0\wedge C_0) = 0$ (resp$.$ $(A_0\wedge A_1) \wedge (B_1\wedge C_1) = 0$). 
Hence, $a\wedge x = a\wedge y = 0$ and $a$ intersects both $x$ and $y$. The analogous arguments prove that $b$ and $c$ also intersect both $x$ and $y$. 
\end{enumerate}

Next, it is straightforward that $a$ intersects every line $\ell\in T$ (resp$.$ $\ell\in U$) since $a\wedge T = 0$ (resp$.$ $a\wedge U = 0$) because $a \wedge \alpha = a \wedge (a_0 \, A_1 - a_1 \, A_0) = 0$. 
Similarly, $c$ (resp$.$ $b$) intersects every line $\ell\in T$ (resp$.$ $\ell\in U$) since $c\wedge T = 0$ (resp$.$ $b\wedge U = 0$). 
Therefore, $T$ (resp$.$ $U$) must be a linear congruence with focal lines $a$ and $c$ (resp$.$ $a$ and $b$). 
Finally, the same arguments yield every $\ell\in S$ intersects both $x$ and $y$. 
Hence, $S$ is also a linear congruence with focal lines $x$ and $y$. 
\end{proof}

\begin{mytheoremD} 
\label{theoD: (1,2,2)}
Let $\phi$ be a general real trilinear birational map of type $(1,2,2)$, and let $a,b,c,x,y\subset \P_\C^3$ be the lines in Proposition C$.$\ref{propC: (1,2,2)}. 
Then, one of the following possibilities holds:
\begin{itemize}
\item[a)] The lines $a,b,c,x,y$ are all real. 
\item[b)] The lines $a,b,c$ are real, and the lines $x,y$ are skew complex-conjugate. 
\end{itemize}
Moreover, every (possibly degenerate) real parametric line congruence in case a) falls (up to permutation of the factors of $\P_\C^1\times\P_\C^1\times\P_\C^1$) into one of the following classes: 
\begin{enumerate}
\item[a.1)] (General) The lines $a,b,c$ are pairwise skew and the lines $x,y$ are skew: 
\begin{itemize}
\item $S$ is hyperbolic linear $(A.1)$ with focal lines $x$ and $y$. 
\item $T$ is hyperbolic linear $(A.1)$ with focal lines $a$ and $c$. 
\item $U$ is hyperbolic linear $(A.1)$ with focal lines $a$ and $b$. 
\end{itemize}
\item[a.2)] The lines $a,c$ are skew, $b\leadsto a$, and the lines $x,y$ are skew: 
\begin{itemize}
\item $S$ is hyperbolic linear $(A.1)$ with focal lines $x$ and $y$. 
\item $T$ is hyperbolic linear $(A.1)$ with focal lines $a$ and $c$. 
\item $U$ is parabolic linear $(A.3)$ with focal line $2a$.  
\end{itemize}
\item[a.3)] The lines $a,b,c$ are pairwise skew and $y\leadsto x$:  
\begin{itemize}
\item $S$ is parabolic linear $(A.3)$ with focal line $2x$.  
\item $T$ is hyperbolic linear $(A.1)$ with focal lines $a$ and $c$. 
\item $U$ is hyperbolic linear $(A.1)$ with focal lines $a$ and $c$.  
\end{itemize}
\item[a.4)] The lines $a,c$ are skew, $b\leadsto a$, and $y\leadsto x$: 
\begin{itemize}
\item $S$ is parabolic linear $(A.3)$ with focal line $2x$.  
\item $T$ is hyperbolic linear $(A.1)$ with focal lines $a$ and $c$. 
\item $U$ is parabolic linear $(A.3)$ with focal line $2a$.   
\end{itemize} 
\item[a.5)] The line $a$ is skew with both $b,c$, the intersection $P = b\cap c \cap x$ is a point, and the lines $x,y$ are skew:
\begin{itemize}
\item $S$ is hyperbolic linear $(A.1)$ with focal lines $x$ and $y$.  
\item $T$ is hyperbolic linear $(A.1)$ with focal lines $a$ and $c$. 
\item $U$ is hyperbolic linear $(A.1)$ with focal lines $a$ and $b$. 
\end{itemize}
\item[a.6)] $b\leadsto a$ and $c\leadsto b$  approaching through different smooth quadrics, and $x,y$ are skew:  
\begin{itemize}
\item $S$ is hyperbolic linear $(A.1)$ with focal lines $x$ and $y$.  
\item $T$ is parabolic linear $(A.3)$ with focal line $2a$.  
\item $U$ is parabolic linear $(A.3)$ with focal lines $2a$.  
\end{itemize}
\item[a.7)] $b\leadsto a$ and $c\leadsto b$  approaching through different smooth quadrics, and $y\leadsto x$:  
\begin{itemize}
\item $S$ is parabolic linear $(A.3)$ with focal lines $2x$.  
\item $T$ is parabolic linear $(A.3)$ with focal line $2a$.  
\item $U$ is parabolic linear $(A.3)$ with focal lines $2a$.  
\end{itemize}
\item[a.8)] The line $a$ is skew with both $b,c$, the intersection $P = b\cap c\cap x$ is a point, and $y\leadsto x$:
\begin{itemize}
\item $S$ is parabolic linear $(A.3)$ with focal line $2x$.  
\item $T$ is hyperbolic linear $(A.1)$ with focal lines $a$ and $c$.   
\item $U$ is hyperbolic linear $(A.1)$ with focal lines $a$ and $b$. 
\end{itemize} 
\end{enumerate}

Furthermore, every (possibly degenerate) real parametric line congruence in case b) falls (up to permutation of the factors of $\P_\C^1\times\P_\C^1\times\P_\C^1$) into one of the following classes: 
\begin{enumerate}
\item[b.1)] The lines $a,b,c$ are pairwise skew: 
\begin{itemize}
\item $S$ is elliptic linear $(A.2)$ with focal lines $x$ and $y$.  
\item $T$ is hyperbolic linear $(A.1)$ with focal lines $a$ and $c$.   
\item $U$ is hyperbolic linear $(A.1)$ with focal lines $a$ and $b$. 
\end{itemize}
\item[b.2)] The lines $a,c$ are skew, and $b\leadsto a$:  
\begin{itemize}
\item $S$ is elliptic linear $(A.2)$ with focal lines $x$ and $y$.  
\item $T$ is parabolic linear $(A.3)$ with focal line $2a$.    
\item $U$ is hyperbolic linear $(A.1)$ with focal lines $a$ and $b$. 
\end{itemize}
\item[b.3)] $b\leadsto a$ and $c\leadsto a$ approaching through different smooth quadrics: 
\begin{itemize}
\item $S$ is elliptic linear $(A.2)$ with focal lines $x$ and $y$.  
\item $T$ is parabolic linear $(A.3)$ with focal line $2a$.    
\item $U$ is parabolic linear $(A.3)$ with focal line $2a$. 
\end{itemize}
\end{enumerate}
\end{mytheoremD}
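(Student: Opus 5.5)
The argument follows the pattern of Theorems D.\ref{theoD: (1,1,1)} and D.\ref{theoD: (1,1,2}, in two stages: first the reality dichotomy a)/b) for the lines of Proposition C.\ref{propC: (1,2,2)}, then an enumeration of the degenerations within each case.

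\emph{Reality of $a,b,c$.} For general real $\phi$, Proposition C.\ref{propC: (1,2,2)} shows that $T$ and $U$ are linear with focal lines $\{a,c\}$ and $\{a,b\}$, and that $a,b,c$ are pairwise skew. Since $\phi$ is real, complex conjugation permutes the three parametric congruences. The type $(1,2,2)$ distinguishes the first factor, so conjugation preserves $S$. A priori it could swap $T$ and $U$ together with the second and third factors; but the parametric congruence of a real map in a given factor is itself real, so $T$ and $U$ are each real. Suppose $a$ were non-real. By Lemma \ref{lemma: minimality of focal}, $\ovl{a}$ is a focal line of both $T$ and $U$. Then $\ovl{a}=c$ and $\ovl{a}=b$, contradicting the skewness of $b$ and $c$. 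Hence $a$ is real. Consequently $\ovl{c}$ is a focal line of $T$ distinct from $a$, so $\ovl{c}=c$, and likewise $\ovl{b}=b$.

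\emph{Reality of $x,y$.} Now $S$ is real and linear with skew focal lines $x,y$, so Lemma \ref{lemma: skew lines} says they are either both real or complex conjugate; these are exactly cases a) and b). The uniqueness in Proposition C.\ref{propC: (1,2,2)} gives the same conclusion more intrinsically: $x$ and $y$ are the two transversals of the real skew lines $a,b,c$, so $\{x,y\}$ is conjugation-invariant. I would also note that the real points of $x$ and $y$ are not forced. Three real pairwise skew lines determine a real regulus whose two common transversals can be real or conjugate, so both cases genuinely occur.

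\emph{Degenerations.} I would list the allowed degenerations exactly as in the earlier proofs: a pair of skew focal lines becomes an infinitely near double line along a smooth quadric ($\leadsto$), or two focal lines come to intersect. These are applied to the pair $x,y$ and to the pairs among $a,b,c$, subject to the incidence constraint that $a,b,c$ all meet $x$ and $y$.
\begin{itemize}
\item Collapsing two of $a,b,c$ to the same double line is possible only in the patterns listed. It cannot make $T$ and $U$ coincide, by the same argument as in Theorem D.\ref{theoD: (1,1,1)}, which is why $c\leadsto b$ must occur through a different quadric.
\item An intersection $b\cap c$ must lie on $x$ or $y$, which produces the point $P$ in cases a.5 and a.8.
\item In case b) the lines $x,y$ have no real points, so $y\leadsto x$ cannot occur and neither can intersection points on $x$. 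Only the degenerations of $a,b,c$ survive, giving b.1--b.3.
\end{itemize}
Realizability of each listed class would be taken from the classification theorem of \cite{trilinear} for type $(1,2,2)$.

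The main obstacle is showing that this list is exhaustive. Specifically, one must rule out the intersection degeneration, which would produce degenerate classes $(C)$, and show which combinations survive when they are imposed simultaneously. My first approach is to track the matroid conditions of Lemma A.\ref{lemmaA: (1,2,2)}: for each weakening, check that the focal structure obtained from the parameterizations in Corollary B.\ref{corB: (1,2,2)} stays linear, and compare the result with the degenerate strata in \cite{trilinear}.
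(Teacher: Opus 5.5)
Your reality argument is essentially the paper's, and slightly more careful. You correctly use the last part of Lemma~\ref{lemma: minimality of focal}, derive $\ovl{b}=b$ and $\ovl{c}=c$ explicitly, and obtain the dichotomy a)/b) for $x,y$ from Lemma~\ref{lemma: skew lines}, which the paper leaves implicit. Two further remarks are correct and useful: a point $b\cap c$ must lie on $x$ or $y$, and in case b) it would be a real point on lines without real points. This explains why a.5 and a.8 have no counterpart in case b).

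The gap is exhaustiveness, which you call the ``main obstacle'' but leave as a plan. Nothing in the proposal excludes the intersection degeneration for the two focal lines of one and the same congruence, i.e.\ $x$ meeting $y$, or $a$ meeting $b$ or $c$. Without this, degenerate congruences of class $(C)$ are not ruled out, and the list a.1--a.8, b.1--b.3 is not shown to be complete. The claim that two of $a,b,c$ can collapse only in the listed patterns is likewise only asserted.

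The paper closes this step with dominance (Remark~\ref{remark: no planar LCs}) combined with the incidences of Proposition C.\ref{propC: (1,2,2)}. If $x$ and $y$ met, then $a,b,c$, each of which meets both, would be forced to be coplanar, and $T,U$ would become planar. Symmetrically, if $a$ met $b$ or $c$, then $x,y$ would be coplanar and $S$ planar. Neither is possible for a birational map. The lines $b,c$ escape this argument because they are not the two focal lines of a single congruence, which is exactly why a.5 and a.8 occur.

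Your proposed route, weakening the matroid conditions of Lemma A.\ref{lemmaA: (1,2,2)} inside the formulas of Corollary B.\ref{corB: (1,2,2)}, cannot do the job by itself, because that normal form is established only for general maps. For example, $b\cap c\neq\varnothing$ means $B_0,B_1,C_0,C_1$ are dependent. If $x,y$ (the axes of the pencils spanned by $E_0$ and $E_1$) stay skew, this forces $B_0\propto C_0$ or $B_1\propto C_1$. Then $\delta_1$ or $\delta_2$ is a polynomial multiple of a fixed plane, hence vanishes identically by dominance. So classes a.5 and a.8 are invisible to that bookkeeping.

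Finally, three pairwise skew lines do not have ``two common transversals''; their transversals form the entire opposite regulus of the quadric through them. Your ``intrinsic'' justification of the conjugation invariance of $\{x,y\}$ is therefore wrong as stated, although your argument via Lemma~\ref{lemma: skew lines} suffices.
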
 

\begin{proof}
If $\phi$ is general, Proposition~C.\ref{propC: (1,2,2)} readily implies that the three parametric line congruences are linear $(A)$. 
We first prove that the lines $a,b,c$ are real. 
Suppose that $a$ is non-real. 
Then Corollary~\ref{corollary: focal real} implies that $\overline{a}$ is also a focal line of both $T$ and $U$. 
Since the line $c$ (resp.~$b$) is also a focal line of $T$ (resp.~$U$), we obtain $c=\overline{a}$ (resp.~$b=\overline{a}$). 
This yields a contradiction, since the lines $b$ and $c$ are skew. 
Therefore, the three lines $a,b,c$ are real. 

Next, we proceed with the classification. 
Proposition~C.\ref{propC: (1,2,2)} already describes the general class. 
Moreover, since all focal curves in the general class are lines, the following degenerations may occur:
\begin{enumerate}
\item Two focal lines approach each other along a common normal direction, until they intersect at a point. 
In this case, we obtain a degenerate line congruence $(C)$.
\item Two focal lines approach each other along a smooth quadric until they become infinitely near (flat limit), yielding a double line contained in a smooth quadric. 
In this case, we obtain a parabolic linear congruence $(A.3)$.
\end{enumerate} 
The lines $x$ and $y$ cannot degenerate as described in $1$, since this would imply that $a$, $b$, and $c$ are coplanar, yielding planar congruences $T$ and $U$. 
Similarly, $b$ (resp$.$ $c$) and $a$ cannot degenerate as in $1$, since then $x$ and $y$ would be coplanar, yielding a planar congruence $S$. 
However, the lines $b$ and $c$ may degenerate to a configuration in which they intersect, since they are not the focal lines of any line congruence and do not give rise to planar line congruences.
This occurs in classes a$.$5) and a$.$8). 
Therefore, the possible degenerations are precisely those listed in the statement. 
The fact that all these possibilities occur as the parametric line congruences of a trilinear birational map of type $(1,2,2)$ is ensured by \cite[Theorem $5.6$]{trilinear}.
\end{proof} 

In the following Example \ref{example: nonreal focal lines}, we provide an explicit example of a real trilinear birational map in class $b.1)$. 
Remarkably, the classes $b)$ in Theorem D$.$\ref{theoD: (1,2,2)} are the only ones that exhibit non-real focal varieties in the entire classification. 

\begin{example}
\label{example: nonreal focal lines}
Consider the real rational map
\begin{eqnarray*}
\phi : (\P_\C^1)^3
&\dashrightarrow &\P_\C^3 
\\[2pt] 
\nonumber
(s_0:s_1)
\times 
(t_0:t_1)
\times 
(u_0:u_1)
&\mapsto 
& 
(f_0:f_1:f_2:f_3)
\end{eqnarray*} 
whose defining trilinear polynomials are 
\begin{align*}
f_0 = & \,
s_0t_0u_1 + s_1t_0u_1 + s_0t_1u_1
\ ,& 
f_1 = & \,
s_1t_0u_1 + s_0t_1u_1 + 2s_1t_1u_1
\ ,
\\[4pt]
f_2 = & \,
-s_0t_1u_0 - s_1t_1u_0 + s_0t_0u_1 + s_1t_0u_1
\ ,& 
f_3 = & \,
-s_1t_1u_0 + s_1t_0u_1
\ .
\end{align*}
\noindent 
The map is birational of type $(1,2,2)$, as it admits the inverse
\begin{eqnarray*}
\phi^{-1} : \P_\C^3
&\dashrightarrow & (\P_\C^1)^3
\\[2pt] 
\nonumber
(x_0:x_1:x_2:x_3)
&\mapsto 
& 
(\sigma_0:\sigma_1)
\times 
(\tau_0:\tau_1)
\times 
(\upsilon_0:\upsilon_1)
\end{eqnarray*} 
given by
\begin{align*}
\sigma_0 = & \,
x_2 - x_3
\ ,& 
\sigma_1 = & \,
x_3
\ ,
\\[4pt]
\tau_0 = & \,
x_0x_2 - x_1x_2 + x_0x_3 + x_1x_3
\ ,& 
\tau_1 = & \,
x_1x_2 - x_0x_3
\ ,
\\[4pt]
\upsilon_0 = & \,
x_0x_2 - x_1x_2 - x_2^2 + x_0x_3 + x_1x_3 - x_3^2
\ ,& 
\upsilon_1 = & \,
x_1x_2 - x_0x_3
\ .
\end{align*}
\noindent 
We obtain the following:
\begin{enumerate}
\item The focal lines of $S$ are the skew complex-conjugate lines
$$
x\equiv (i x_0 - x_1, i x_2 - x_3)
\ \ ,\ \ 
y\equiv (i x_0 + x_1, i x_2 + x_3)
\ .
$$
Hence the parametric line congruence $S$ is elliptic linear $(A.2)$.

\item The focal lines of $T$ are the skew real lines
$$
a\equiv (x_2,x_3)
\ \ ,\ \ 
c\equiv  (x_0-x_2,x_1-x_3)
\ .
$$
Hence $T$ is hyperbolic linear $(A.1)$.

\item The focal lines of $U$ are the skew real lines
$$
a\equiv (x_2,x_3)
\ \ ,\ \ 
b\equiv (x_0,x_1)
\ .
$$
Hence $U$ is hyperbolic linear $(A.1)$.
\end{enumerate}

Therefore $\phi$ belongs to class $b.1)$ in Theorem~D.\ref{theoD: (1,2,2)}.
\end{example}

\subsection{Real Line Congruences of Trilinear Birational Maps of Type $(2,2,2)$} 

Finally, we focus on real trilinear birational maps of type $(2,2,2)$. 
In this case, the inverse is quadratic on each factor of $\mathbb{P}^1_{\mathbb{C}} \times \mathbb{P}^1_{\mathbb{C}} \times \mathbb{P}^1_{\mathbb{C}}$. 

As usual, we begin proving explicit formulas for the syzygies of a general birational map. 

\begin{mylemmaA}
\label{lemmaA: (2,2,2)}
Let $\phi$ be a general trilinear birational map of type $(2,2,2)$. 
The defining polynomials $\Phi = (f_0,f_1,f_2,f_3)$ admit syzygies of the form 
\begin{gather}
\alpha_1 
= 
b_1 c_0 \, B - b_0 c_1 \, C
\ \ ,\ \ 
\beta_1 
= 
a_1 c_0 \, A - a_0 c_1 \, C
\ \ ,\ \ 
\gamma_1 
= 
a_1 b_0 \, A - a_0 b_1 \, B
\ \ ,\\[4pt]
\alpha_2 
= 
\omega_1 b_1 c_0 \, B + (\omega_2 b_1 c_0 + \omega_3 b_0 c_1) \, A - b_1 c_1 \, D
\ \ ,\\[4pt]
\beta_2 
= 
\omega_2 a_1 c_0 \, A + (\omega_1 a_1 c_0 + \omega_3 a_0 c_1) \, B - a_1 c_1 \, D
\ \ ,\\[4pt]
\gamma_2 
= 
\omega_3 a_1 b_0 \, A + (\omega_1 a_1 b_0 + \omega_2 a_0 b_1) \, C - a_1 b_1 \, D
\ \ \phantom{,}
\end{gather} 
for some linear $a_i = a_i(s_0,s_1)$, $b_j = b_j(t_0,t_1)$, $c_k = c_k(s_0,s_1) \in R$ 
and linearly independent $A,B,C,D\in (\C^4)^\vee$.  
\end{mylemmaA}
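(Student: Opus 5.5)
Following the pattern of Lemmas A.\ref{lemmaA: (1,1,2)} and A.\ref{lemmaA: (1,2,2)}, I would rely on the normal form for general trilinear birational maps of type $(2,2,2)$ from \cite{trilinear_siaga}, namely the corresponding corollary in its section on type $(2,2,2)$. That corollary should give linearly independent $A,B,C,D\in(\C^4)^\vee$, linear forms $a_i=a_i(s_0,s_1)$, $b_j=b_j(t_0,t_1)$, $c_k=c_k(u_0,u_1)$ and constants $\omega_1,\omega_2,\omega_3$ such that, after a projective change of coordinates,
\begin{gather*}
\langle A,\Phi\rangle = a_0 b_1 c_1 , \quad
\langle B,\Phi\rangle = a_1 b_0 c_1 , \quad
\langle C,\Phi\rangle = a_1 b_1 c_0 , \\[2pt]
\langle D,\Phi\rangle = \omega_1 a_1 b_0 c_0 + \omega_2 a_0 b_1 c_0 + \omega_3 a_0 b_0 c_1 .
\end{gather*}
These are the three monomials with exactly two indices equal to $1$, plus a combination of the monomials with exactly one index equal to $1$. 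The precise indices and the role of each $\omega_i$ will be fixed by matching the syzygies in the statement. I would first check that this normal form is the one from the reference, possibly after renaming $A,B,C$ or swapping $a_0\leftrightarrow a_1$ and similar.

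Given this normal form, the syzygies follow by direct verification, exactly as the phrase ``Hence, the syzygies follow'' in the earlier lemmas suggests.
\begin{itemize}
\item For the $\alpha_1,\beta_1,\gamma_1$, I would eliminate one variable at a time. For example, $b_1c_0\langle B,\Phi\rangle - b_0c_1\langle C,\Phi\rangle = a_1b_0b_1c_0c_1 - a_1b_0b_1c_0c_1 = 0$, which gives $\alpha_1$. The forms $\beta_1$ and $\gamma_1$ come out the same way, after adjusting the pairing of indices.
\item For $\alpha_2$, I would multiply $\langle D,\Phi\rangle$ by $b_1c_1$. I would then cancel each of its three monomials using $A$ and $B$ with coefficients linear in $b,c$, namely $\omega_1b_1c_0B + (\omega_2b_1c_0+\omega_3b_0c_1)A$. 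This uses identities such as $b_1c_1\cdot a_1b_0c_0 = b_1c_0\cdot a_1b_0c_1$.
\item The forms $\beta_2$ and $\gamma_2$ are obtained by permuting the roles of the factors.
\end{itemize}
Each identity is a one-line monomial check.

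The main obstacle is bookkeeping. I must make sure the normal form is stated consistently with the displayed syzygies; the subscripts as printed are somewhat asymmetric, for instance $\gamma_2$ mixes $A$ and $C$. If needed, I will relabel variables or absorb signs into the $\omega_i$ so that every displayed form vanishes on $\Phi$.

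I would also note, as a remark only, that each $\alpha_\bullet$ involves only $t,u$, which is what later allows the $s$-lines to be written as $\alpha_1\wedge\alpha_2$. The same holds for $\beta_\bullet$ with $s,u$ and for $\gamma_\bullet$ with $s,t$. Linear independence of $A,B,C,D$ comes directly from the reference, by genericity.
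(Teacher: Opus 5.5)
Your proposal is correct and is essentially the paper's proof: the paper invokes \cite[Corollary~6.9]{trilinear_siaga} to obtain exactly the normal form you wrote down (with $\omega_1,\omega_2,\omega_3$ nonzero and $c_k=c_k(u_0,u_1)$), and then declares the six displayed forms to be syzygies by direct verification. Your hedge about relabeling is unnecessary: with that normal form, each of the six identities holds monomial by monomial exactly as printed, including $\gamma_2$ with its mix of $A$ and $C$.
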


\begin{proof}
It follows from \cite[Corollary $6.9$]{trilinear_siaga} that we can find linearly independent $A,B,C,D\in (\C^4)^\vee$ satisfying 
\begin{gather*}
\langle 
A
, 
\Phi
\rangle
= 
a_0 b_1 c_1 
\ \ ,\ \  
\langle 
B
, 
\Phi
\rangle
= 
a_1 b_0 c_1 
\ \ ,\ \ 
\langle 
C
, 
\Phi
\rangle
= 
a_1 b_1 c_0 
\ \ ,\\[4pt]   
\langle 
D
, 
\Phi
\rangle
= 
\omega_1 a_1 b_0 c_0 
+ 
\omega_2 a_0 b_1 c_0 
+ 
\omega_3 a_0 b_0 c_1  
\ \ ,  
\end{gather*}
for some linear $a_i=a_i(s_0,s_1)$, 
$b_j = b_j(t_0,t_1)$, 
$c_k = c_k(u_0,u_1)$ and nonzero $\omega_1,\omega_2,\omega_3\in \C$. 
Thus, it is straightforward to verify that the relations in the statement are syzygies of $\Phi$. 
\end{proof}

\begin{mycorB}
\label{corB: (2,2,2)}
With the notation of Lemma A$.$\ref{lemmaA: (2,2,2)},  the parametric line congruences admit the rational parameterizations $\P_\C^1\times\P_\C^1 \dashrightarrow Q$ given by  
\begin{eqnarray}
\label{eq: lc 1 (2,2,2)}
S \equiv &  
\alpha_1 \wedge \alpha_2 & = 
(b_1 c_0 \, B - b_0 c_1 \, C)
\wedge 
(\omega_1 b_1 c_0 \, B 
+ 
(\omega_2 b_1 c_0 
+ 
\omega_3 b_0 c_1) \, A 
- 
b_1 c_1 \, D
)
\ ,
\\[4pt]
\label{eq: lc 2 (2,2,2)}
T \equiv &  
\beta_1 \wedge \beta_2 & = 
(a_1 c_0 \, A - a_0 c_1 \, C)
\wedge 
(\omega_2 a_1 c_0 \, A 
+ 
(\omega_1 a_1 c_0 
+ 
\omega_3 a_0 c_1) \, B 
- 
a_1 c_1 \, D
)
\ ,
\\[4pt]
\label{eq: lc 3 (2,2,2)}
U \equiv &  
\gamma_1 \wedge \gamma_2 & = 
(a_1 b_0 \, A - a_0 b_1 \, B)
\wedge 
(\omega_3 a_1 b_0 \, A 
+ 
(\omega_1 a_1 b_0 
+ 
\omega_2 a_0 b_1) \, C 
- 
a_1 b_1 \, D
)
\ .
\end{eqnarray}
\end{mycorB}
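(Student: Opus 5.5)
The argument follows the proofs of Corollary B.\ref{corB: (1,1,2)} and Corollary B.\ref{corB: (1,2,2)}, using the syzygies of Lemma A.\ref{lemmaA: (2,2,2)}. The key observation is to see which variables each syzygy depends on. The syzygies $\alpha_1,\alpha_2$ involve only $b_j=b_j(t_0,t_1)$ and $c_k=c_k(u_0,u_1)$, not $a_i$. Likewise $\beta_1,\beta_2$ are free of $b_j$, and $\gamma_1,\gamma_2$ are free of $c_k$. So the first step is to record these dependencies, reading the $c_k$ as linear forms in $u_0,u_1$ as in the proof of Lemma A.\ref{lemmaA: (2,2,2)}.

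Now fix a point $(t_0:t_1)\times(u_0:u_1)$. The identities $\langle\alpha_i,\Phi\rangle=0$ hold for every value of $(s_0:s_1)$. Hence every point $\phi((s_0:s_1)\times(t_0:t_1)\times(u_0:u_1))$ of the corresponding $s$-line lies in both planes $\alpha_1=0$ and $\alpha_2=0$. If these two planes are distinct, their intersection is exactly the $s$-line. Its dual Plücker coordinates are then $\alpha_1\wedge\alpha_2$, as described in Section \ref{subsec: plucker coordinates}. The same argument, applied to $\beta_1\wedge\beta_2$ and $\gamma_1\wedge\gamma_2$, gives the $t$- and $u$-lines.

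The only step that needs care is that the exterior product does not vanish identically, so that the formulas really parameterize $S,T,U$ as rational maps. It suffices to check this at one parameter value, using that $A,B,C,D$ are linearly independent. For $\alpha_1\wedge\alpha_2$, expand in the basis of wedges. The term $-b_0c_1\,C\wedge(-b_1c_1\,D)$ contributes $b_0b_1c_1^2\,(C\wedge D)$. No other term involves $C\wedge D$, so the coefficient $b_0b_1c_1^2$ is a nonzero polynomial and $\alpha_1\wedge\alpha_2\not\equiv 0$. The analogous $C\wedge D$ and $B\wedge D$ terms handle $\beta_1\wedge\beta_2$ and $\gamma_1\wedge\gamma_2$. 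Away from the zero locus of these products, the two planes are distinct, which completes the argument. This nondegeneracy check is the main obstacle, but it is a short expansion.
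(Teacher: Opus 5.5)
Your proposal is correct and follows the paper's argument, which is written out for the $(1,1,2)$ analogue and applies verbatim here: the syzygies $\alpha_i$, $\beta_i$, $\gamma_i$ are free of the $s$-, $t$-, $u$-variables respectively, so each parametric line lies in the corresponding pair of planes and its dual Plücker coordinates are their wedge product. Your additional check that, for instance, the $C\wedge D$ coefficient $b_0b_1c_1^2$ of $\alpha_1\wedge\alpha_2$ is a nonzero polynomial is sound and is left implicit in the paper; note that the linear forms are nonzero because otherwise, e.g., $\langle A,\Phi\rangle$ would vanish identically and $\phi$ would map into a plane.
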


\begin{mypropC} 
\label{propC: (2,2,2)}
Let $\phi$ be a general trilinear birational map of type $(2,2,2)$. 
Then, we can find distinct lines $x,y,z\subset\P_\C^3$ and a plane conic $c\subset\P_\C^3$ satisfying:
\begin{enumerate}[itemsep=2pt]
\item[1.] The intersection $P = x \cap y \cap z$ is a point.  
\item[2.] The line $x$ (resp$.$ $y$ and $z$) intersects $c$ at a point. 
\end{enumerate}
such that:
\begin{enumerate}[itemsep = 2pt]
\item[a)] The focal curves of $S$ are $x$ and $c$. 
\item[b)] The focal curves of $T$ are $y$ and $c$. 
\item[c)] The focal curves of $U$ are $z$ and $c$. 
\end{enumerate} 
\end{mypropC}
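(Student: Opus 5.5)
We start from the parameterizations of Corollary B.\ref{corB: (2,2,2)} and follow the same pattern as in Propositions C.\ref{propC: (1,1,2)} and C.\ref{propC: (1,2,2)}. Since the first syzygies are binomials, the natural candidates for the focal lines are
\[
x \equiv B\wedge C \ , \quad y \equiv A\wedge C \ , \quad z \equiv A\wedge B \ .
\]
Each is nonzero because $A,B,C,D$ are linearly independent, and the three lines are distinct. All three lie in the plane $D$? No: each lies in the two planes indexed by its factors, so all three pass through the common point $P\equiv A\wedge B\wedge C$, i.e.\ the point $A=B=C=0$. They do not lie in a common plane, since $A,B,C$ are independent. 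This gives property 1.

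Next we check that $x\wedge S=0$. The factor $\alpha_1$ is a combination of $B$ and $C$ only, so every term of $(B\wedge C)\wedge\alpha_1\wedge\alpha_2$ repeats $B$ or $C$ and vanishes. Hence every $s$-line meets $x$. The same argument with $\beta_1$ and $\gamma_1$ shows that every $t$-line meets $y$ and every $u$-line meets $z$.

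For the conic, we change coordinates by $A\mapsto x_0$, $B\mapsto x_1$, $C\mapsto x_2$, $D\mapsto x_3$. We then expand $\alpha_1\wedge\alpha_2$, pass to primal Plücker coordinates using the identification of Section~\ref{subsec: plucker coordinates}, and contract with a suitable plane to obtain the intersection point of the generic $s$-line with that plane. The plane containing $c$ must be independent of the map's factor, so by symmetry among $S,T,U$ we try the plane $D=0$, i.e.\ $x_3=0$. Contracting $S$ by $x_3$ yields a point whose coordinates are quadratic in $(b,c)$. We expect it to be proportional to something of the form
\[
\bigl(\,\cdot\, b_1c_1,\ \cdot\, b_0c_1,\ \cdot\, b_1c_0\,\bigr)
\]
with coefficients involving $\omega_1,\omega_2,\omega_3$. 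It then satisfies a fixed quadratic relation, plausibly
\[
\omega_1 x_1x_2+\omega_2x_0x_2+\omega_3x_0x_1=0 ,
\]
the conic through the three coordinate points suggested by the form of $\langle D,\Phi\rangle$. The essential point is that this quadric is symmetric in the roles of $S,T,U$ under the permutation of $(A,B,C)$ and $(\omega_1,\omega_2,\omega_3)$. Repeating the contraction for $T$ and $U$, or invoking that symmetry, would then show that the same conic $c=\{D=0,\ Q_\omega=0\}$ is met by every line of all three congruences.

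Property 2 follows by intersecting, e.g., $x=\{x_1=x_2=0\}$ with the plane $x_3=0$. This gives the point $e_0$, which satisfies $Q_\omega=0$ since every monomial of $Q_\omega$ involves $x_1$ or $x_2$. So $x\cap c$ is a single point, and likewise for $y$ and $z$; note also $P\notin\{D=0\}$.

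Finally, minimality: by Lemma~\ref{lemma: minimality of focal}, each curve is focal unless the congruence is degenerate. Generality (the $\omega_i\neq0$, so $c$ is a smooth conic not through $P$) rules out a single common point. We conclude that $S,T,U$ are quadratic $(B)$ with focal curves $x\cup c$, $y\cup c$ and $z\cup c$, respectively.

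The main obstacle is the contraction computation. The exact quadric (signs, and which coefficient multiplies which monomial) must come out so that one conic serves all three congruences. If contracting by $x_3$ does not give a point independent of the $\omega$-mixing term in $\alpha_2$, we will first verify directly that $\iota_{x_3}(\alpha_1\wedge\alpha_2)$ has no $e_3$ component. Only then will we fit the conic through the three computed points.
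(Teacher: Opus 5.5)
This is the paper's proof: the same lines $x\equiv B\wedge C$, $y\equiv A\wedge C$, $z\equiv A\wedge B$, the same vanishing of $x\wedge S$ via $\alpha_1$, and the same conic $c=\{D=0,\ \omega_1BC+\omega_2AC+\omega_3AB=0\}$ obtained by cutting the generic parametric line with $D=0$. (The paper sends $D\mapsto x_0$ rather than $x_3$; that difference is immaterial.)

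The one step you leave as an expectation is the decisive one, and it does go through. It is quickest to solve $\alpha_1=\alpha_2=D=0$ directly. Writing $W=\omega_2b_1c_0+\omega_3b_0c_1$, the solution is
\[
(A:B:C:D)=\bigl(-\omega_1b_0b_1c_0c_1 : b_0c_1W : b_1c_0W : 0\bigr).
\]
At this point
\[
\omega_1BC+\omega_2AC+\omega_3AB=\omega_1b_0b_1c_0c_1W\bigl(W-\omega_2b_1c_0-\omega_3b_0c_1\bigr)=0 .
\]
Up to sign, this point is the paper's $\iota_{x_0}(S)$.

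Your symmetry shortcut is also sound:
\begin{itemize}
\item $A\leftrightarrow B$, $\omega_1\leftrightarrow\omega_2$, $a_i\leftrightarrow b_i$ sends $(\alpha_1,\alpha_2)$ to $(\beta_1,\beta_2)$;
\item $B\leftrightarrow C$, $\omega_2\leftrightarrow\omega_3$, $b_i\leftrightarrow c_i$ sends $(\beta_1,\beta_2)$ to $(\gamma_1,\gamma_2)$.
\end{itemize}
Both substitutions fix the quadric.

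Two minor corrections. First, your fallback would not have rescued anything. Checking that $\iota_{x_3}(\alpha_1\wedge\alpha_2)$ has no $e_3$-component is vacuous, since that component vanishes for every bivector. Fitting a conic through three points is also ill-posed, since three points do not determine a conic. Fortunately your predicted conic is the right one.

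Second, in the minimality step, smoothness of $c$ is not the operative reason $S$ is non-degenerate. If all $s$-lines passed through one point $O$, then every line through $O$ would meet $x$ and $c$, forcing $O\in x\cap c=\{B=C=D=0\}$. At that point $\alpha_2$ equals $W$ times the nonzero value of $A$. Hence the generic $s$-line misses it, and $S$ is not degenerate.
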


\noindent See also Fig.~\ref{fig:1658}.

\begin{figure}\centering\setlength{\unitlength}{1mm}\footnotesize
  \begin{picture}(0,0)
    \put(11,0){$x$}\put(30,13){$y$}\put(54,0){$z$}\put(0,7){$c$}
    \put(22,10){$S$}\put(35,24){$T$}\put(38,10){$U$}
  \end{picture}
  \includegraphics[width=.4\textwidth]{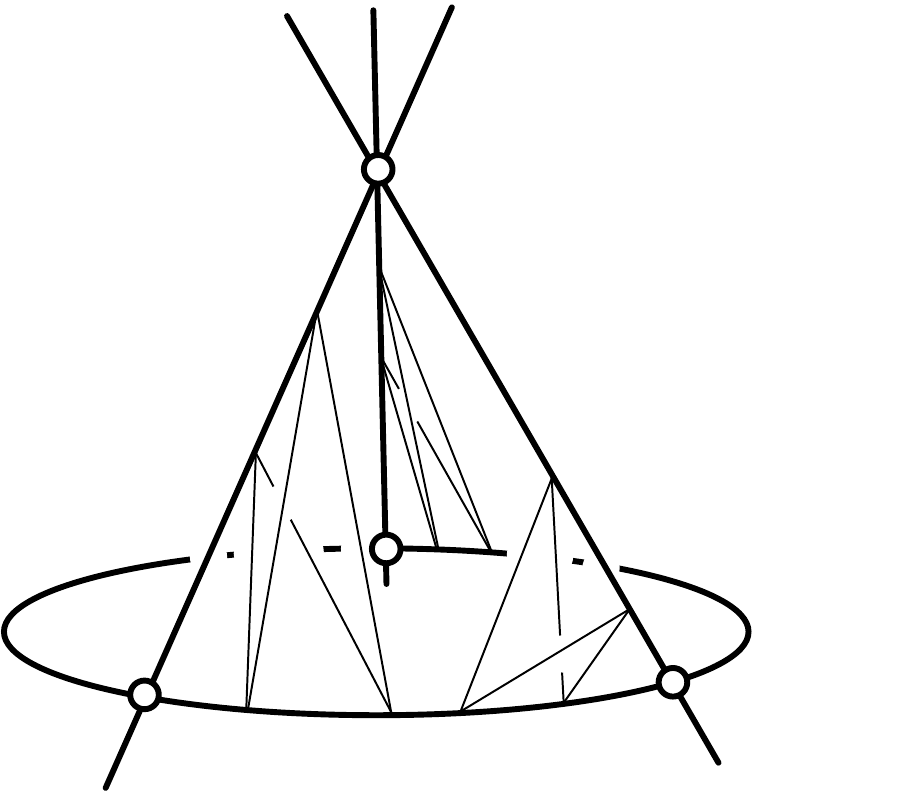}
  \caption{Focal curves (thick lines) and line congruences (thin lines) for type $(2,2,2)$, class 1.}
  \label{fig:1658}
\end{figure}

\begin{proof}
We start from the parameterizations of Corollary B$.$\ref{corB: (2,2,2)}. 
First, define the lines $x,y,z\subset \P_\C^3$ with Plücker coordinates
\begin{equation}
\label{eq: focal lines (2,2,2)}
x \equiv B \wedge C 
\ , \ 
y \equiv A \wedge C
\ , \ 
z \equiv A \wedge B
\end{equation} 
which are all nonzero by linear independence. 
By definition, the three lines intersect at the point $P$ dual to $A\wedge B\wedge C$, as explained in Section \ref{subsec: duality}. 
It is straightforward that $x$ intersects every line $\ell\in S$ since $x\wedge S = 0$ because $x \wedge \alpha_1 = x \wedge (b_1 c_0 \, B - b_0 c_1 \, C) = 0$. 
The obvious argument similarly proves that $y$ (resp$.$ $z$) intersects every line $\ell\in T$ (resp$.$ $\ell\in U$). 

Next, we prove that every $\ell \in S$ meets the plane conic $c\subset\P_\C^3$ defined by 
$$
D = 0
\ \ ,\ \ 
Q
= 
\omega_1 
BC 
+ 
\omega_2 
AC 
+
\omega_3 
AB
= 0
\ . 
$$
The conic $c$ is smooth since, by linear independence, $D(P)\not=0$.
Geometrically, the plane $D=0$ does not contain the unique singular point $P$ of $Q = 0$. 
For simplicity, we apply the linear change of variables 
$$
A \mapsto x_1 
\ \ ,\ \ 
B \mapsto x_2 
\ \ ,\ \ 
C \mapsto x_3 
\ \ ,\ \ 
D \mapsto x_0
\ \ ,
$$
which sends the parametrization \eqref{eq: lc 1 (2,2,2)} of $S$ to 
\begin{align*}
S 
&
\equiv 
(b_1 c_0 \, x_2 - b_0 c_1 \, x_3)
\wedge 
(\omega_1 b_1 c_0 \, x_2 
+ 
(\omega_2 b_1 c_0 
+ 
\omega_3 b_0 c_1) \, x_1 
- 
b_1 c_1 \, x_0
) 
\\[4pt]
& 
= 
- 
b_1 c_0
(\omega_2 b_1 c_0 
+ 
\omega_3 b_0 c_1)
\, 
(x_1\wedge x_2) 
+ 
b_1^2 c_0 c_1 
\, 
(x_0\wedge x_2) 
+ \\[4pt]
&
\phantom{=}
\omega_1 b_0 b_1 c_0 c_1 
\,
(x_2 \wedge x_3) 
+ 
b_0 c_1 
(\omega_2 b_1 c_0 
+ 
\omega_3 b_0 c_1)
\,
(x_1\wedge x_3)
- 
b_0 b_1 c_1^2 
\, 
(x_0 \wedge x_3)
\ .
\end{align*}
In particular, the dual Plücker coordinates (see Section \ref{subsec: duality}) write as 
\begin{align*}
S 
&
\equiv 
- 
b_1 c_0
(\omega_2 b_1 c_0 
+ 
\omega_3 b_0 c_1)
\, 
(e_0\wedge e_3) 
-
b_1^2 c_0 c_1 
\, 
(e_1\wedge e_3) 
+ \\[4pt]
&
\phantom{=}
\omega_1 b_0 b_1 c_0 c_1 
\,
(e_0 \wedge e_1) 
- 
b_0 c_1 
(\omega_2 b_1 c_0 
+ 
\omega_3 b_0 c_1)
\,
(e_0\wedge e_2)
- 
b_0 b_1 c_1^2 
\, 
(e_1 \wedge e_2)
\ .
\end{align*}
Thus, the intersection of the generic line in $S$ with $x_0 = 0$ is computed as the contraction of $S$ in by $x_0$, namely 
$$
\iota_{x_0}(S)
= 
-b_1c_0 
(\omega_2 b_1 c_0 + \omega_3 b_0 c_1) 
\, 
e_3 
+ 
\omega_1 b_0 b_1 c_0 c_1 
\, 
e_1 
- 
b_0 c_1
(\omega_2 b_1 c_0 + \omega_3 b_0 c_1) 
\, 
e_2
\ .
$$
In particular, $\iota_{x_0}(S)$ satisfies  $Q = 0$. 
Hence, $\ell\cap\{x_0=0\}\in c$ and every $s$-line meets $c$. 
Hence, $S$ must be the quadratic line congruence with focal curves $x$ and $c$. 
With the obvious modifications we conclude that every $\ell\in T$ (resp$.$ $\ell\in U$) also intersects $c$. 
The statement follows. 
\end{proof}

\begin{mytheoremD} 
\label{theoD: (2,2,2)}
Let $\phi$ be a general real trilinear birational map of type $(2,2,2)$, and let $x,y,z,c\subset \P_\C^3$ be the curves in Proposition C$.$\ref{propC: (2,2,2)}. 
Then, the four focal curves are real. 

Moreover, every (possibly degenerate) real parametric line congruence in case a) falls (up to permutation of the factors of $\P_\C^1\times\P_\C^1\times\P_\C^1$) into one of the following classes: 
\begin{enumerate}
\item[1)] (General) The intersection $P = x\cap y\cap z$ is a point outside the plane supporting $c$:  
\begin{itemize}
\item $S$ is quadratic $(B)$ with focal curves $x$ and $c$.  
\item $T$ is quadratic $(B)$ with focal curves $y$ and $c$.     
\item $U$ is quadratic $(B)$ with focal curves $z$ and $c$.   
\end{itemize}
\item[2)] The intersection $P = x\cap y\cap z$ is a point supported on $c$:   
\begin{itemize}
\item $S$ is quadratic $(B)$ with focal curves $x$ and $c$.  
\item $T$ is quadratic $(B)$ with focal curves $y$ and $c$.     
\item $U$ is quadratic $(B)$ with focal curves $z$ and $c$.   
\end{itemize}
\end{enumerate}
\end{mytheoremD}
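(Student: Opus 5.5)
The proof follows the pattern of Theorems D.\ref{theoD: (1,1,2} and D.\ref{theoD: (1,2,2)}. There are two steps: a reality argument for the focal curves, followed by an enumeration of the admissible degenerations.

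\textbf{Reality of the focal curves.} For a general $\phi$, Proposition C.\ref{propC: (2,2,2)} shows that each of $S$, $T$, $U$ is quadratic $(B)$. Its focal varieties are the lines $x$, $y$, $z$ respectively, together with the common plane conic $c$. Since $\phi$ is real, each parametric congruence is real: complex conjugation maps $s$-lines to $s$-lines, and likewise for $t$- and $u$-lines. Corollary \ref{corollary: focal real} then applies to each congruence separately. For $S$, the conjugate $\ovl{x}$ is again a focal line of $S$ and $\ovl{c}$ is again a focal plane conic. A class $B$ congruence has exactly one focal line and one focal conic, so $\ovl{x}=x$ and $\ovl{c}=c$. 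The same argument gives $\ovl{y}=y$ and $\ovl{z}=z$. Consequently $P=x\cap y\cap z$ is real, and the plane $D=0$ is real, being the plane spanned by the real conic $c$.

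One may also want the reality at the level of the normal form of Lemma A.\ref{lemmaA: (2,2,2)}, meaning that $A,B,C,D$ and the $\omega_i$ can be chosen real up to scaling. This follows because the planes $A,B,C$ are spanned by pairs of the real lines $x,y,z$, and $D$ is the plane of $c$. I do not expect to need this for the classification itself.

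\textbf{Classification.} The general class 1) is exactly Proposition C.\ref{propC: (2,2,2)}: there $D(P)\neq 0$, so $P$ does not lie on the plane of $c$, and $c$ is smooth. I would then list the degenerations of the configuration ``three concurrent lines, each meeting a conic'' that keep $\phi$ birational of type $(2,2,2)$ and keep the congruences nonplanar (Remark \ref{remark: no planar LCs}).

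\begin{enumerate}
\item The point $P$ moves onto the plane of $c$ and onto $c$ itself. In the normal form this is the limit $D(P)\to 0$. The lines $x,y,z$ then all meet $c$ at $P$, and by the same contraction computation as in the proof of Proposition C.\ref{propC: (2,2,2)} each congruence remains quadratic. This gives class 2).
\item The conic $c$ splits into two lines, or two of the lines $x,y,z$ become infinitely near. I would argue that these degenerations change the type. The cone $\omega_1BC+\omega_2AC+\omega_3AB=0$ has vertex $P$, and $c$ is its section by $D=0$. The section can only become singular if the plane passes through $P$, or if some $\omega_i$ vanishes. A vanishing $\omega_i$ drops the degree of the inverse and leads back to types $(1,2,2)$ or $(1,1,2)$, which are already covered. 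If instead $P$ lies on the plane but the section stays smooth, we are in the previous item.
\item $x,y,z$ become coplanar, or two of them coincide. This makes $A,B,C$ linearly dependent, contradicting linear independence (and birationality), or produces planar congruences.
\end{enumerate}

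The existence of both classes for real maps would be quoted from the corresponding realization theorem for type $(2,2,2)$ in \cite{trilinear}. Taking real $A,B,C,D$ and real $\omega_i$ gives real examples of each class.

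\textbf{Main obstacle.} The hard step is justifying that no further degenerations, such as a reducible conic or a parabolic limit, occur within type $(2,2,2)$. This requires tracking exactly which conditions on $\omega_i$ and on $A,B,C,D$ preserve birationality of type $(2,2,2)$. I would handle it by checking the nondegeneracy conditions coming from \cite[Corollary $6.9$]{trilinear_siaga}.
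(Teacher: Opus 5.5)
Your reality argument is the paper's: each of $S,T,U$ is of class $B$, so Corollary~\ref{corollary: focal real} forces $x,y,z,c$ to be real.

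The gap is in the classification, in how you produce class 2) and exclude a reducible conic. In the normal form of Lemma A.\ref{lemmaA: (2,2,2)}, $P$ is the common zero of $A,B,C$. So $D(P)=0$ means $D\in\langle A,B,C\rangle$. The limit $D(P)\to 0$ therefore destroys the linear independence of $A,B,C,D$. The coordinate change $A,B,C,D\mapsto x_1,x_2,x_3,x_0$ behind the contraction computation is then unavailable, so ``the same contraction computation'' cannot be run.

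Geometrically it is worse. Here $c$ is the section of the cone $\omega_1BC+\omega_2AC+\omega_3AB=0$, whose vertex is $P$, by the plane $D=0$. A plane through the vertex of a cone always cuts it in a pair of lines, possibly double, through $P$. So the case you route into item 1, ``$P$ lies on the plane but the section stays smooth'', is empty. The limit you use to produce class 2) yields exactly the singular conic that your item 2 is meant to exclude. As written, items 1 and 2 contradict each other. Neither shows that in class 2) the conic is smooth and the congruences stay quadratic, and neither rules out the reducible conic.

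The missing idea is that the degenerate class cannot be reached inside the general normal form; it has to be analysed in its own normal form. The paper does this by invoking the two classes of type $(2,2,2)$ in \cite[Theorem~5.8]{trilinear} and computing the inverse in each. That shows directly that the common focal conic $c$ is smooth in both.

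After that, the paper needs only two geometric exclusions:
\begin{itemize}
\item No two of $x,y,z$ can coincide, since then two parametric congruences would coincide.
\item If $P$ lies on the plane of $c$, it must lie on $c$. Otherwise each of $x,y,z$, being a line through $P$, could meet $c$ only by lying in that plane, and the congruences would be planar, which Remark~\ref{remark: no planar LCs} forbids.
\end{itemize}
You also skip this second argument and simply assert that $P$ moves ``onto $c$ itself''.

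Your proposed remedy for the main obstacle, checking nondegeneracy conditions from \cite[Corollary~6.9]{trilinear_siaga}, would not close the gap. That check stays within the normal form for general $\phi$, where $D(P)\neq 0$ is built in, so it cannot reach the degenerate class.
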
 

\begin{proof}
If $\phi$ is general, Proposition C$.$\ref{propC: (2,2,2)} readily implies that the three parametric line congruences are quadratic $(B)$.  
Since $\phi$ is real, Corollary \ref{corollary: focal real} readily implies that the focal curves $a,b,c,x,y\subset \P_\C^3$ are all real. 

Regarding the classification, Proposition B$.$\ref{propC: (2,2,2)} provides the general class.  
Moreover, the computation of the inverse in the two classes listed in \cite[Theorem $5.8$]{trilinear} shows that the common focal conic $c$ to the three parametric line congruences is smooth. 
Moreover, no two of the lines $x,y,z$ can degenerate to the same line, as this would force two of the parametric line congruences to coincide. 
Finally, if in the general class the intersection point $P = x \cap y \cap z$ degenerates to a point on the plane supporting $c$, then $P$ must lie on $c$, since otherwise the corresponding parametric line congruences would be planar. 
Hence, the only possible degeneration is $P\in c$, namely the three lines $x,y,z$ meet $c$ at the same point $P$.  
\end{proof}

\section{Conclusion}
\label{sec:conl}

The main contribution of the paper is the classification of all possible real line congruences arising from the parametric lines of real trilinear birational maps $\phi:(\P_\C^1)^3\dashrightarrow \P_\C^3$. 
A notable feature is that non-real focal varieties may occur, as in the case of the classes b) in Theorem D$.$~\ref{theoD: (1,2,2)}.  
Example~\ref{example: nonreal focal lines} provides an explicit real trilinear birational map whose associated parametric line congruence $S$ has focal curves given by two skew complex-conjugate lines, which have no real points.

\subsection*{Acknowledgment}
This research was funded in whole or in part by the Austrian Science Fund (FWF) 10.55776/PIN6740223 and 10.55776/I6779. 
The second author is thankful to Javier Sendra-Arranz for useful suggestions and discussions.

\end{document}